\documentclass[12pt, onecolumn]{article}

\usepackage[dvipdfmx]{graphicx}
\usepackage[utf8]{inputenc}
\usepackage[T1]{fontenc}
\usepackage[numbers]{natbib}
\usepackage{amsmath}
\usepackage{amsthm}
\usepackage{amssymb}
\usepackage{algorithm}
\usepackage{algorithmic}
\usepackage{ecltree}
\usepackage{enumerate}
\usepackage{xcolor}
\usepackage{comment}
\usepackage{tikz}
\usepackage{wrapfig}
\usepackage{mathrsfs}
\usepackage{mathtools}
\usepackage[margin=25truemm]{geometry}
\mathtoolsset{showonlyrefs=true}

\usepackage{combgames}

\usepackage{bm}
\usepackage{bbm}
\usepackage{float}
\usepackage{cases}
\usepackage{longtable}
\usepackage{diagbox}

\newtheorem{proposition}{Proposition}[section]
\newtheorem{lemma}[proposition]{Lemma}
\newtheorem{theorem}[proposition]{Theorem}
\newtheorem{corollary}[proposition]{Corollary}

\theoremstyle{definition}
\newtheorem{definition}[proposition]{Definition}
\newtheorem{example}[proposition]{Example}
\newtheorem{remark}[proposition]{Remark}

\newcommand{\NN}{{\mathcal{N}}}
\newcommand{\PP}{{\mathcal{P}}}
\newcommand{\LL}{{\mathcal{L}}}
\newcommand{\RR}{{\mathcal{R}}}

\newcommand{\st}{{\ast}}
\newcommand{\up}{{\uparrow}}
\newcommand{\doubleup}{{\Uparrow}}

\newcommand{\seq}{\mathbin{\to}}

\newcommand{\aint}{\mathbb{Z}}
\newcommand{\uint}{\mathbb{Z}_{\geq 0}}
\newcommand{\pint}{\mathbb{Z}_{\geq 1}}

\newcommand{\aw}{\mathrm{aw}}

\newcommand{\wset}{\mathcal{W}}
\newcommand{\sset}[1]{\mathcal{F}({#1})}
\newcommand{\spos}[3]{{#1}({#2},{#3})}
\newcommand{\pos}[2]{{#1}\langle{#2}\rangle}
\newcommand{\ppos}[3]{{#1}\langle{#2},{#3}\rangle}

\newcommand{\eqlab}[2]{\overset{(\mathrm{#1})}{#2}}

\makeatletter

\@addtoreset{equation}{section}
\makeatother

\title{Partizan Serial Nim}
\author{Kengo Hashimoto}
\date{}

\begin{document}
\maketitle

\begin{abstract}
A combinatorial game is a two-player game without hidden information or chance elements.
The main object of combinatorial game theory is to determine the outcome
(i.e., which player has a winning strategy) of a given position in combinatorial games.
\textsc{Nim} is a well-known and fundamental ruleset in combinatorial game theory.
This paper proposes a novel partizan variant of \textsc{Nim} called \textsc{Partizan-Serial-Nim}, defined as follows:
there are $n$ piles of stones indexed by $1, 2, \ldots, n$;
the two players have permutations $\bm{\sigma}^L$ and $\bm{\sigma}^R$ of $(1, 2, \ldots, n)$, respectively;
a move is to remove any positive number of stones from the non-empty pile with the minimum value in the player's permutation;
the player who cannot make a move loses.
This ruleset is a generalization of \textsc{Serial-Nim} and \textsc{Partizan-End-Nim}.
We give an algorithm to compute the outcome of a given position in \textsc{Partizan-Serial-Nim} in $O(n^2)$ time, provided that each arithmetic and comparison operation is performed in $O(1)$ time.
Also, for the case where all non-empty piles have the same number $m$ of stones,
we prove that the outcome does not depend on $m$ for $m \geq 2$ and present an algorithm to compute the outcome in $O(n)$ time.
Further, we prove that the atomic weight of every position in \textsc{Partizan-Serial-Nim} is an integer.
\end{abstract}

\section{Introduction}

A combinatorial game is a two-player game without hidden information or chance elements; e.g., \textsc{Chess}, \textsc{Go}, and \textsc{Checkers}.
In a combinatorial game, two players, conventionally called Left (female) and Right (male), make moves alternately.
If it is guaranteed that a winner is determined within a finite number of moves, then exactly one of the two players has a winning strategy; that is, by continuing to make appropriate moves according to a certain strategy,
the player can win regardless of the opponent's moves. The main object of combinatorial game theory is to determine the \emph{outcome}
(i.e., which player has a winning strategy) of a given position of a combinatorial game.

As a classical result in combinatorial game theory, Bouton's theorem \cite{Bou1901} on \textsc{Nim} is well-known.
\textsc{Nim} is a ruleset played on $n$ piles of stones; a move in \textsc{Nim} is to remove any positive number of stones from any single pile;
the player who cannot make a move loses.

Levine \cite{Lev06} studied a variant of \textsc{Nim} called \textsc{Serial-Nim}, where the $n$ piles are prioritized as integers $1, 2, \ldots, n$, and
a move is to remove any positive number of stones from the non-empty pile with the highest priority (i.e., assigned the minimum integer).
This can be viewed as a sequential compound \cite{Has26, Ste07, SU93} of positions where every component is a position in \textsc{Nim} with a single pile.

For a partizan generalization of \textsc{Serial-Nim}, it is natural to consider the setting where the two players have different priorities on the piles.
This motivates us to propose a new ruleset, \textsc{Partizan-Serial-Nim}, defined as follows.
\begin{quote}
There are $n$ piles of stones. 
Left and Right have permutations $\bm{\sigma}^L$ and $\bm{\sigma}^R$ of $(1, 2, \ldots, n)$, respectively.
The permutation $\bm{\sigma}^L$ (resp.~$\bm{\sigma}^R$) indicates the priority of each pile for Left (resp.~Right), where the smaller the value, the higher the priority.
A move is to remove any positive number of stones from the player's \emph{target pile}, the non-empty pile with the highest priority for the player.
A winner is determined by normal play convention: the player who cannot make a move loses.
\end{quote}

Without loss of generality, we may fix $\bm{\sigma}^L = (1, 2, \ldots, n)$ by changing the indices of the piles if necessary.
Then the priorities of both players are determined only by $\bm{\sigma}^R$, which we write simply as $\bm{\sigma} = (\sigma(1), \sigma(2), \ldots, \sigma(n))$ without the superscripts $L$ and $R$.

\begin{example}
Let $\bm{\sigma} = (10, 1, 3, 9, 2, 5, 7, 6, 8, 4)$, and
suppose the initial position is represented as $\bm{w} = (2, 3, 1, 5, 2, 3, \allowbreak 1, 3, 2, 3)$,
where the $i$-th element $w_i$ of $\bm{w}$ indicates the number of stones in the $i$-th pile.
Let $\bm{\sigma}^{-1} = (\sigma^{-1}(1), \sigma^{-1}(2), \ldots, \allowbreak \sigma^{-1}(n))
= (2, 5, 3, 10, 6, 8, 7, 9, 4, 1)$ be the inverse permutation of $\bm{\sigma}$ (i.e., $\sigma^{-1}(\sigma(i)) = i$).
Let $l(\bm{w})$ (resp.~$r(\bm{w})$) denote the index of Left's (resp.~Right's) target pile.
The following is an example of a possible play where Right plays first.
\begin{itemize}
\item At first, $r(\bm{w}) = 2$ since $\sigma^{-1}(1) = 2$ and $w_2 > 0$. Right removes $2$ stones from the $2$nd pile.
This results in $\bm{w} = (2, 1, 1, 5, 2, 3, 1, 3, 2, 3)$.
\item Then $l(\bm{w}) = 1$ since $w_1 > 0$. Left removes $2$ stones from the $1$st pile. This results in $\bm{w} = (0, 1, 1, 5, 2, 3, 1, 3, 2, 3)$.
\item Then $r(\bm{w}) = 2$ since $\sigma^{-1}(1) = 2$ and $w_2 > 0$. Right removes $1$ stone from the $2$nd pile.
This results in $\bm{w} = (0, 0, 1, 5, 2, 3, 1, 3, 2, 3)$.
\item Then $l(\bm{w}) = 3$ since $w_1 = w_2 = 0$ and $w_3 > 0$. Left removes $1$ stone from the $3$rd pile.
This results in $\bm{w} = (0, 0, 0, 5, 2, 3, 1, 3, 2, 3)$.
\item Then $r(\bm{w}) = 5$ since $\sigma^{-1}(1) = 2, \sigma^{-1}(2) = 5$, $w_2 = 0$, and $w_5 > 0$. Right removes $2$ stones from the $5$th pile. This results in $\bm{w} = (0, 0, 0, 5, 0, 3, 1, 3, 2, 3)$.
\item Then $l(\bm{w}) = 4$ since $w_1 = w_2 = w_3 = 0$ and $w_4 > 0$. Left removes $4$ stones from the $4$th pile.
This results in $\bm{w} = (0, 0, 0, 1, 0, 3, 1, 3, 2, 3)$.
\item Then $r(\bm{w}) = 10$ since $\sigma^{-1}(1) = 2, \sigma^{-1}(2) = 5, \sigma^{-1}(3) = 3, \sigma^{-1}(4) = 10$,
$w_2 = w_5 = w_3 = 0$, and $w_{10} > 0$. Right removes $1$ stone from the $10$th pile. This results in $\bm{w} = (0, 0, 0, 1, 0, 3, 1, 3, 2, 2)$.
\item In this manner, the play continues until all piles are empty, and then the player with the last move wins by normal play convention.
\end{itemize}
\end{example}

Note that \textsc{Serial-Nim} is the particular case of \textsc{Partizan-Serial-Nim} where $\bm{\sigma} = (1, 2, \ldots, n)$.
The particular case $\bm{\sigma} = (n, n-1, \ldots, 1)$ is also known as \textsc{Partizan-End-Nim} \cite{AN01,ANW19,DKW09}.
Albert and Nowakowski \cite{AN01} gave an algorithm to determine the outcome of a given position in \textsc{Partizan-End-Nim} in $O(n^2)$ time provided that each arithmetic and comparison operation is performed in $O(1)$ time.
Duffy et al.~\cite{DKW09} further refined the proofs in \cite{AN01} and generalized them by allowing piles to have an ordinal number of stones.

\subsection{Contributions and Organization}

In this paper, we mainly provide the following three results:
\begin{itemize}
\item in Section \ref{sec:general},
we extend the results in \cite{AN01, DKW09} and provide an algorithm to compute the outcome of a given position in \textsc{Partizan-Serial-Nim} in $O(n^2)$ time as Theorem \ref{thm:outcome};
\item in Section \ref{sec:uniform},
for the \emph{uniform case}, in which the number of stones in each pile is uniformly $m$, we prove that the outcome does not depend on $m$ for $m \geq 2$,
and we present an algorithm to compute the outcome in $O(n)$ time as Theorem \ref{thm:const-outcome};
\item in Section \ref{sec:aw}, we prove that the atomic weight of every position in \textsc{Partizan-Serial-Nim} is an integer as Theorem \ref{thm:aw-integer},
which partially confirms the conjecture on \textsc{Partizan-End-Nim} with ordinal numbers of stones, stated in \cite{DKW09}.
\end{itemize}

Before presenting the above results, 
we introduce the formal notation and definition of the ruleset \textsc{Partizan-Serial-Nim} in Section \ref{sec:ruleset}.

Although this paper focuses on normal play,
our results are also applicable to mis\`{e}re play, in which the player with the last move loses, because
\textsc{Partizan-Serial-Nim} under mis\`{e}re play is easily reduced to one under normal play by taking a sequential compound (cf.~Remark \ref{rem:seq}).
We also remark that the discussion on the uniform case makes sense by our generalization:
when limited to \textsc{Partizan-End-Nim}, the outcome in the uniform case is determined solely by the parity of $n$ because
if $n$ is even, then the mirror-image strategy works, and if $n$ is odd, then removing the whole target pile is a winning move to reduce the position to the case where $n$ is even.

In this paper, we assume that each arithmetic and comparison operation is performed in $O(1)$ time.
For basic notation and results on combinatorial game theory used in this paper,
we refer to textbooks \cite{ANW19,BCG18,Con00,HG16,Sie13}.

\subsection{Other Related Studies}
Albert and Nowakowski \cite{AN01} also characterized the $\PP$-positions in (impartial) \textsc{End-Nim}
(a.k.a.\ \textsc{Burning-Candle-at-Both-Ends} \cite{Guy95}),
in which a player removes stones from the (at most) two piles at either end of the row of $n$ piles.

Cairns and Ho \cite{CN11} showed similar results for the mis\`{e}re version of \textsc{End-Nim} and another variant called \textsc{Loop-End-Nim}, in which
the winner is the player who reduces the position to a single non-empty pile.
They also gave the Grundy numbers of positions for several cases with a few piles.

For other variants, a multi-player variant is studied by Liu and Wu \cite{LW19};
a \textsc{Wythoff} variant, in which a move is to remove stones from either end or to remove the same number of stones simultaneously from both ends, is analyzed by Fraenkel and Reisner \cite{FR09}.

\section{The Ruleset}
\label{sec:ruleset}

Let $\aint, \uint, \pint$ denote the sets of all integers, non-negative integers, and positive integers, respectively.
For $n \in \uint$, let $[n] = \{i \in \aint : 1 \leq i \leq n\}$.

Below, we introduce the formal notation and definition of \textsc{Partizan-Serial-Nim}.
We fix the number $n \in \pint$ of piles, and a permutation $\bm{\sigma} = (\sigma(1), \sigma(2), \ldots, \sigma(n))$ of $(1, 2, \ldots, n)$ that indicates Right's priority.

We identify a position in \textsc{Partizan-Serial-Nim} with the sequence indicating the numbers of stones in the $n$ piles in the position (i.e., the sequence of $n$ integers in which the $i$-th element is the number of stones in the $i$-th pile).
We define the set of all positions as $\wset \coloneqq \{(w_1, w_2, \ldots, w_n) : \forall i \in [n], w_i \in \uint\}$,
and define the set of all non-terminal positions as $\wset_+ \coloneqq \wset \setminus \{\bm{0}\}$,
where $\bm{0} \coloneqq (0, 0, \ldots, 0) \in \wset$.
Let $\nu(\bm{w})$ denote the number of non-empty piles of a position $\bm{w} \in \wset$, that is, $\nu(\bm{w}) = |\{i \in [n] : w_i > 0\}|$.

For a non-terminal position $\bm{w} \in \wset_+$, let $l(\bm{w})$ and $r(\bm{w})$ denote
the indices of Left's target pile and Right's target pile, respectively.
Namely, $l(\bm{w}) \coloneqq \min\{i \in [n] : w_i > 0\}$ and 
$r(\bm{w}) \coloneqq \operatorname*{argmin} \{\sigma(i) : i \in [n], w_i > 0\}$.

The non-terminal positions are classified into two categories by whether both players' target piles are the same or not.
Define $\wset_{=}$ as the set of all positions in which both players share their target pile, and define $\wset_{\neq}$ as the set of the others.
Namely, $\wset_{=} \coloneqq \{\bm{w} \in \wset_+ : l(\bm{w}) = r(\bm{w})\}$ and 
$\wset_{\neq} \coloneqq \{\bm{w} \in \wset_+ : l(\bm{w}) \neq r(\bm{w})\}$,
so that $\wset_+ = \wset_= \uplus \wset_{\neq}$.

Further, for $\bm{w} \in \wset_+$, let $\alpha(\bm{w})$ and $\beta(\bm{w})$ denote
the number of stones in Left's target pile and Right's target pile, respectively.
Namely, $\alpha(\bm{w}) \coloneqq w_{l(\bm{w})}$ and $\beta(\bm{w}) \coloneqq w_{r(\bm{w})}$.

For $\bm{w} \in \wset_+$ and $\gamma \in \uint$, 
we define $\ppos{\bm{w}}{\gamma}{\cdot}$ (resp.~$\ppos{\bm{w}}{\cdot}{\gamma}$) as
the position obtained by replacing the number of stones in Left's (resp.~Right's) target pile with $\gamma$.
Namely, $\ppos{\bm{w}}{\gamma}{\cdot}$ (resp.~$\ppos{\bm{w}}{\cdot}{\gamma}$) is the sequence obtained by replacing the $l(\bm{w})$-th (resp.~$r(\bm{w})$-th) element of $\bm{w}$ with $\gamma$.

Using the notation above, we now restate the definition of our ruleset formally as follows.
\begin{definition}
We define a ruleset, \textsc{Partizan-Serial-Nim}, as follows.
\begin{itemize}
\item The set of all positions is $\wset$.
\item The zero position $\bm{0} \in \wset$ is the only terminal position.
\item For a non-terminal position $\bm{w} \in \wset_+$, its Left options are $\ppos{\bm{w}}{\alpha'}{\cdot}$ for $\alpha' \in \{0, 1, 2, \ldots, \allowbreak \alpha(\bm{w})-1\}$, and its Right options are $\ppos{\bm{w}}{\cdot}{\beta'}$ for $\beta' \in \{0, 1, 2, \ldots, \beta(\bm{w})-1\}$.
In the conventional notation in combinatorial game theory,
\begin{align}
\bm{w} = \left\{ \ppos{\bm{w}}{\alpha'}{\cdot}_{0 \leq \alpha' < \alpha(\bm{w})} \mid \ppos{\bm{w}}{\cdot}{\beta'}_{0 \leq \beta' < \beta(\bm{w})} \right\}.
\end{align}
\end{itemize}
\end{definition}

Let the symbol ``$=$'' (resp.~``$\neq$'') mean that two positions are the same (resp.~not the same) as elements of $\wset$.
Note that $\bm{w}, \bm{w}' \in \wset$ with $\bm{w} \neq \bm{w}'$ in our notation may share the same game tree (e.g., $n = 2, \bm{\sigma} = (1, 2), \bm{w} = (1, 0), \bm{w}' = (0, 1)$).

\begin{example}
Figure \ref{fig:general-thre} illustrates a position $\bm{w} = (2, 3, 1, 5, 2, 3, 1, 3, 2, 3)$ of \textsc{Partizan-Serial-Nim}, where $\bm{\sigma} = (10, 1, 3, 9, 2, 5, 7, 6, 8, 4)$.
Starting from the $\bullet$ in the lower-left corner,
let the horizontal coordinates be $1, 2, \ldots, n$ from left to right,
and the vertical coordinates be $1, 2, \ldots, n$ from bottom to top.
A point with horizontal coordinate $x$ and vertical coordinate $y$ is denoted by $(x, y)$.
In particular, the coordinate of the $\bullet$ in the lower-left corner is $(1, 1)$.
For each $i \in [n]$, a circle $\bigcirc$ corresponding to the $i$-th pile is drawn at the coordinate $(i, \sigma(i))$, in which the number $w_i$ of stones in the pile is indicated.
Other components in the figure are referred to later.
Based on the figure, it can be intuitively seen that a Left move (resp.~Right move) is to remove stones from the leftmost (resp.~bottommost) non-empty pile $\bigcirc$ in the figure.
Figure \ref{fig:serial-end} illustrates particular cases of \textsc{Serial-Nim} and \textsc{Partizan-End-Nim},
where
the coordinates are based on the $\bullet$ or $\bigcirc$ in the lower-left corner as the origin $(1, 1)$, as in Figure \ref{fig:general-thre}.
More precisely, it shows a position $\bm{w} = (3, 5, 2, 3, 3, 1, 9)$, where $\bm{\sigma} = (1, 2, 3, 4, 5, 6, 7)$ in the left diagram and $\bm{\sigma} = (7, 6, 5, 4, 3, 2, 1)$ in the right diagram, which correspond to \textsc{Serial-Nim} and \textsc{Partizan-End-Nim}, respectively.
Note that all positions $\bm{w}'$ with $\nu(\bm{w}') \geq 2$ of \textsc{Serial-Nim} (resp.~\textsc{Partizan-End-Nim}) satisfy $\bm{w}' \in \wset_=$ (resp.~$\bm{w}' \in \wset_{\neq}$).
\end{example}

\begin{figure}[H]
\centering
\includegraphics[keepaspectratio,scale=0.8]{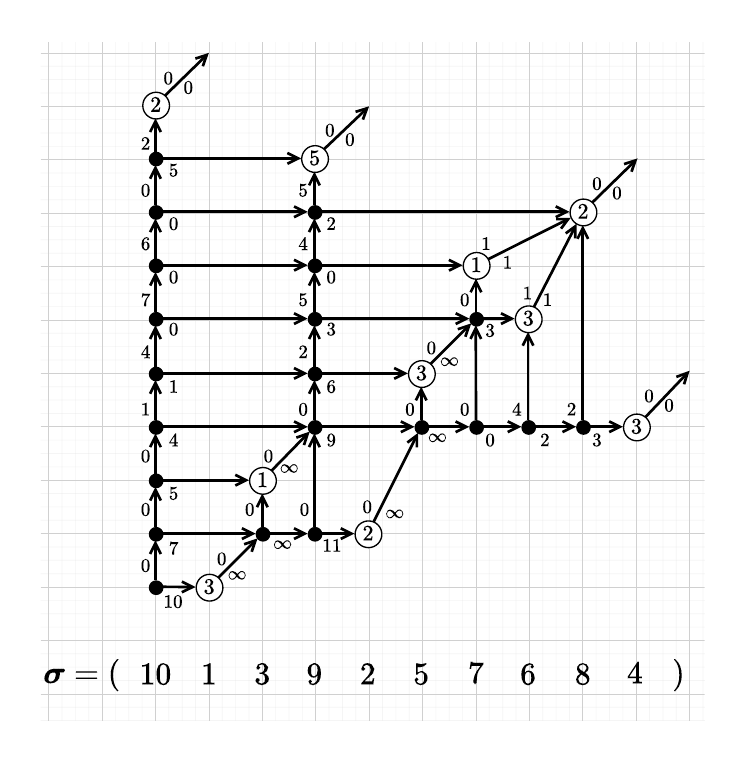}
\caption{A position $\bm{w} = (2, 3, 1, 5, 2, 3, 1, 3, 2, 3)$ of \textsc{Partizan-Serial-Nim}, where $\bm{\sigma} = (10, 1, 3, \allowbreak 9, 2, 5, 7, 6, 8, 4)$}
\label{fig:general-thre}
\end{figure}

\begin{figure}[H]
\centering
\includegraphics[keepaspectratio,scale=0.64]{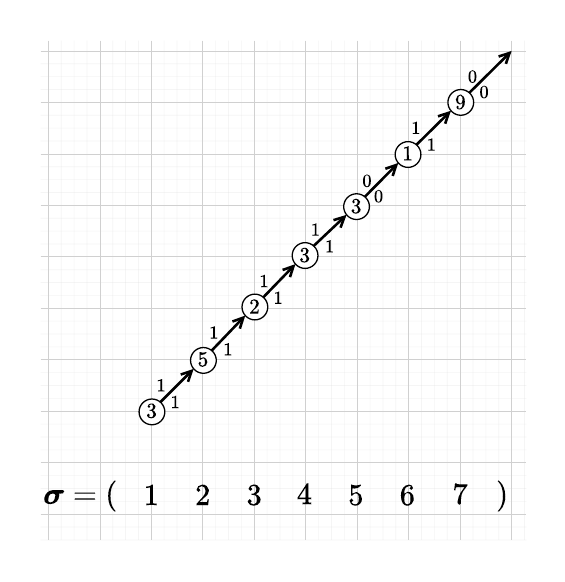}
\includegraphics[keepaspectratio,scale=0.64]{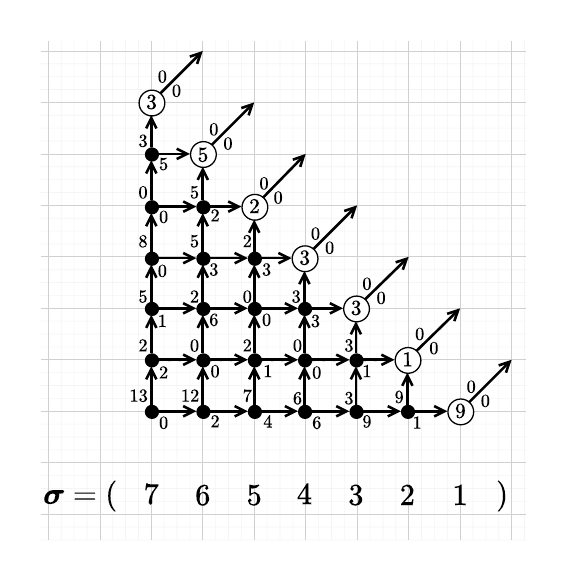}
\caption{A position $\bm{w} = (3, 5, 2, 3, 3, 1, 9)$, where  $\bm{\sigma} = (1, 2, 3, 4, 5, 6, 7)$ and 
$\bm{\sigma} = (7, 6, 5, 4, 3, 2, 1)$, which are specific examples corresponding to \textsc{Serial-Nim} and \textsc{Partizan-End-Nim}, respectively}
\label{fig:serial-end}
\end{figure}

Every position is classified into exactly one of $\LL$-, $\RR$-, $\PP$-, and $\NN$-positions according to
which player has a winning strategy when Left plays first and when Right plays first, respectively.
We write the set of all positions $\bm{w} \in \wset$ that are $\LL$-positions (resp.~$\RR$-positions, $\PP$-positions, $\NN$-positions) under normal play simply as $\LL, \RR, \PP, \NN$.
Let $o(\bm{w})$ denote the outcome class to which a position $\bm{w}$ belongs.
We write the outcome of $\bm{w}$ in mis\`{e}re play as $o^-(\bm{w})$.
We also write the outcome of $\bm{w}$ in normal play as $o^{+}(\bm{w})$ for emphasis, which is synonymous with $o(\bm{w})$.
We also refer to $o^{+}(\bm{w})$ (equivalently, $o(\bm{w})$) as the \emph{normal outcome}
and refer to $o^{-}(\bm{w})$ as the \emph{mis\`{e}re outcome}.

For convenience, we introduce the following additional notation.

First, we define $\bm{w}_L$ (resp.~$\bm{w}_R$) as the position obtained by the Left move (resp.~Right move) removing all stones from her (resp.~his) target pile of $\bm{w}$.
Namely, for $\bm{w} \in \wset_+$, we define $\bm{w}_L \coloneqq \ppos{\bm{w}}{0}{\cdot}$ and $\bm{w}_R \coloneqq \ppos{\bm{w}}{\cdot}{0}$.
Note that if $\bm{w} \in \wset_=$, then $\bm{w}_L$ and $\bm{w}_R$ mean the same position, denoted by $\bm{w}_{\ast}$.
Namely, for $\bm{w} \in \wset_=$, we define $\bm{w}_{\ast} \coloneqq \bm{w}_L = \bm{w}_R$.
For the sake of simplicity, we write $\bm{w}_{LR}$ and $\bm{w}_{L\ast}$ for $(\bm{w}_L)_R$ and $(\bm{w}_L)_{\ast}$, respectively.

For $\bm{w} \in \wset_=$ and $\gamma \in \uint$, we define $\pos{\bm{w}}{\gamma} \coloneqq \ppos{\bm{w}}{\gamma}{\cdot} = \ppos{\bm{w}}{\cdot}{\gamma}$.
For $\bm{w} \in \wset_{\neq}$ and $\alpha, \beta \in \uint$, we define $\ppos{\bm{w}}{\alpha}{\beta} \in \wset$ as the sequence obtained by simultaneously replacing $w_{l(\bm{w})}$ and $w_{r(\bm{w})}$ with $\alpha$ and $\beta$, respectively.

By definition, we observe the following two lemmas.
\begin{lemma}
\label{lem:neq-lr}
For any $\bm{w} \in \wset_{\neq}$, the following statements (i) and (ii) hold.
\begin{enumerate}[(i)]
\item $\bm{w}_{LR} = \ppos{\bm{w}}{0}{0} = \bm{w}_{RL}$.
\item For any $\alpha, \beta \in \pint$, we have
$\ppos{\bm{w}}{0}{\beta} = \ppos{\bm{w}_L}{\cdot}{\beta}$ and 
$\ppos{\bm{w}}{\alpha}{0} = \ppos{\bm{w}_R}{\alpha}{\cdot}$.
\end{enumerate}
\end{lemma}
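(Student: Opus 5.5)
The whole lemma comes down to one observation: emptying Left's target pile does not change Right's target pile, and vice versa. So the plan is to record the effect of the two zeroing operations on the target indices first, and then read off both statements. Throughout, fix $\bm{w} \in \wset_{\neq}$ and write $l = l(\bm{w})$ and $r = r(\bm{w})$, so that $l \neq r$, $w_l > 0$ and $w_r > 0$.

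\textbf{Key observation.} We first show that $r(\bm{w}_L) = r$, and symmetrically $l(\bm{w}_R) = l$.
\begin{itemize}
\item Passing from $\bm{w}$ to $\bm{w}_L$ changes only the $l$-th coordinate, and only by setting it to $0$. Hence the set of non-empty piles of $\bm{w}_L$ is $\{i : w_i > 0\} \setminus \{l\}$.
\item Since $l \neq r$, pile $r$ is still non-empty in $\bm{w}_L$. In particular $\bm{w}_L \in \wset_+$, so $r(\bm{w}_L)$ is defined.
\item The minimizer of $\sigma$ over the larger set $\{i : w_i > 0\}$ is $r$, and $r$ lies in the smaller set. So it is still the minimizer over the smaller set, giving $r(\bm{w}_L) = r$.
\item The same argument, with the minimum index in place of the minimum $\sigma$-value, gives $l(\bm{w}_R) = l$.
\end{itemize}
More generally, the same reasoning shows that modifying coordinate $l$ to any value leaves pile $r$ as Right's target as long as $w_r > 0$. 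This is all that is needed.

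\textbf{Statement (i).} By the observation, $\bm{w}_{LR} = \ppos{\bm{w}_L}{\cdot}{0}$ sets the $r$-th coordinate of $\bm{w}_L$ to $0$. The result agrees with $\bm{w}$ except that coordinates $l$ and $r$ are both $0$, which is exactly $\ppos{\bm{w}}{0}{0}$. Symmetrically, $\bm{w}_{RL} = \ppos{\bm{w}_R}{0}{\cdot}$ sets the $l$-th coordinate of $\bm{w}_R$ to $0$, again yielding $\ppos{\bm{w}}{0}{0}$.

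\textbf{Statement (ii).} Let $\beta \in \pint$. Since $r(\bm{w}_L) = r$, the position $\ppos{\bm{w}_L}{\cdot}{\beta}$ replaces the $r$-th coordinate of $\bm{w}_L$ by $\beta$. This gives $\bm{w}$ with coordinate $l$ set to $0$ and coordinate $r$ set to $\beta$, i.e.\ $\ppos{\bm{w}}{0}{\beta}$. The identity $\ppos{\bm{w}}{\alpha}{0} = \ppos{\bm{w}_R}{\alpha}{\cdot}$ follows in the same way using $l(\bm{w}_R) = l$.

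Nothing here is a real obstacle. The only point needing care is that the target indices are computed in the modified position, which is exactly why the key observation must come first. It uses only that $l \neq r$ keeps the other target pile non-empty; positivity of $\alpha$ and $\beta$ is not actually needed.
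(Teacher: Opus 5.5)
Your proof is correct and takes the same approach as the paper, which states this lemma as an immediate consequence of the definitions without writing out a proof. Your key observation, that $r(\bm{w}_L) = r(\bm{w})$ and $l(\bm{w}_R) = l(\bm{w})$ whenever $l(\bm{w}) \neq r(\bm{w})$, is precisely the content that the paper's ``by definition'' leaves implicit.
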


\begin{remark}
\label{rem:seq}
Depending on the structure of $\bm{\sigma}$, a position $\bm{w}$ may be decomposed into a sequential compound of two positions.
The sequential compound $G \seq H$ of games $G$ and $H$ is the game constructed by combining $G$ and $H$ in the way that $G$ is played until it reaches a terminal position, and then $H$ is played after $G$ has finished.
In general, the following assertion {\cite[Theorem 6.1]{SU93}} regarding the outcome of $G \seq H$ holds:
for any games $G, H$, we have
\begin{align}
o^+(G \seq H)
= \begin{cases}
\LL &\,\,\text{if}\,\, o^+(H) = \LL,\\
\RR &\,\,\text{if}\,\, o^+(H) = \RR,\\
o^+(G) &\,\,\text{if}\,\, o^+(H) = \PP,\\
o^-(G) &\,\,\text{if}\,\, o^+(H) = \NN.
\end{cases}
\label{eq:t1ujq4inra3j}
\end{align}
Let $p \in [n-1]$ satisfy that for any $i, j \in [n]$ with $i \leq p < j$, it holds that $\sigma(i) < \sigma(j)$.
Then the piles indexed $p+1$ through $n$ are not played until the piles indexed $1$ through $p$ have been exhausted.
Therefore, $\bm{w}$ is represented as the sequential compound $\bm{w} = \bm{w}' \seq \bm{w}''$
of $\bm{w}' \coloneqq (w_1, \ldots, w_p, 0, \ldots, 0)$ and $\bm{w}'' = (0, \ldots, 0, w_{p+1}, \ldots, w_n)$.
In particular, if $\bm{w} \in \wset_=$, then
$\bm{w}$ is expressed as $\bm{w} = \bm{w}' \seq \bm{w}_{\ast}$,
where $\bm{w}' \coloneqq (0, 0, \ldots, 0, \alpha(\bm{w}), 0, 0, \ldots, 0)$, in which only the $l(\bm{w})$-th (equivalently, $r(\bm{w})$-th) pile is non-empty.

To obtain the mis\`{e}re outcome of $\bm{w}$, by \eqref{eq:t1ujq4inra3j}, it suffices to obtain the normal outcome 
of $\bm{w}^- \coloneqq (w_1, w_2, \ldots, w_n, 0) \seq (0, 0, \ldots, 0, m) = (w_1, w_2, \ldots, w_n, m)$ with $n+1$ piles for an arbitrary $m \in \pint$ under Right's priority permutation $\bm{\sigma}' \coloneqq (\sigma(1), \sigma(2), \ldots, \sigma(n), n+1)$.
Therefore, computing the mis\`{e}re outcome can be easily reduced to computing the normal outcome.
\end{remark}

\section{The General Case}
\label{sec:general}

In contrast to the uniform case discussed in Section \ref{sec:uniform}, this section describes the results for the general case.

\subsection{The Results for the General Case}
We provide an algorithm to compute the outcome of a given position in \textsc{Partizan-Serial-Nim} in $O(n^2)$ time
by extending the algorithm for \textsc{Partizan-End-Nim} in \cite{AN01, DKW09}.

We first describe the general idea behind the algorithm for \textsc{Partizan-End-Nim}, where $\bm{\sigma} = (n, n-1, \ldots, 1)$, as follows.
Generally speaking, the more stones Left has in her target pile (i.e., the first pile) of a position $\bm{w} = (w_1, w_2, \ldots, w_n)$,
the more options she has and thus the more advantageous $\bm{w}$ is for her.
For a fixed $(w_2, w_3, \ldots, w_n)$, there is a threshold $L(\bm{w})$ such that
Left can win playing first if and only if $w_1 > L(\bm{w})$.
In fact, $L(\bm{w}) \leq \sum_{i = 2}^n w_i$ holds because if $w_1 \geq \sum_{i = 2}^n w_i+1$, then Left can win by removing one stone from the first pile in each turn until the other piles are empty, and then removing the whole first pile when it is the only remaining non-empty pile.
The symmetric argument guarantees the existence of a threshold $R(\bm{w})$ for Right.
Then recursive formulas to derive the thresholds $L(\bm{w}')$ and $R(\bm{w}')$ of a given position $\bm{w}'$ from the ones of the smaller positions $\bm{w}'_L$ and $\bm{w}'_R$ are established. Using them, the thresholds $L(\bm{w})$ and $R(\bm{w})$ are recursively computed in $O(n^2)$ time.
The desired outcome $o(\bm{w})$ is determined by $L(\bm{w})$ and $R(\bm{w})$.

Although our results are primarily based on the idea described above, the situation is more complex.
In \textsc{Partizan-Serial-Nim}, 
even if the number of stones in the target pile is greater than the total number of stones in all other piles,
the target pile may not necessarily be the only pile remaining at the end;
therefore, the argument in the previous paragraph does not guarantee the existence of thresholds.
In fact, there exists a position in which it is impossible to win no matter how many stones the target pile has (i.e., the value of the threshold is $\infty$).
Moreover, while all positions with at least two non-empty piles in \textsc{Partizan-End-Nim} belong to $\wset_{\neq}$, we need to deal with positions in $\wset_=$ in the case of \textsc{Partizan-Serial-Nim}.

We first define counterparts of the thresholds in \cite[Definition 8]{DKW09} as follows.
\begin{definition}
\label{def:thre}
For $\bm{w} \in \wset_+$, we introduce the following definitions (i) and (ii).
\begin{enumerate}[(i)]
\item $L(\bm{w}) \coloneqq \min \{\gamma \in \uint : \ppos{\bm{w}}{\gamma}{\cdot} \in \LL \cup \PP\}$,
where $L(\bm{w}) \coloneqq \infty$ if $\ppos{\bm{w}}{\gamma}{\cdot} \not\in \LL \cup \PP$ for all $\gamma \in \uint$.
\item $R(\bm{w}) \coloneqq
\min \{\gamma \in \uint : \ppos{\bm{w}}{\cdot}{\gamma} \in \RR \cup \PP\}$,
where $R(\bm{w}) \coloneqq \infty$ if $\ppos{\bm{w}}{\cdot}{\gamma} \not\in \RR \cup \PP$ for all $\gamma \in \uint$.
\end{enumerate}
\end{definition}

Note that we are adopting definitions of $L(\cdot)$ and $R(\cdot)$ that are slightly modified versions of the ones given in \cite{DKW09} for the sake of generalization from \textsc{Partizan-End-Nim} to \textsc{Partizan-Serial-Nim}.
More specifically, $L(\bm{w}) = \tilde{L}(\bm{w}_L)$ and $R(\bm{w}) =  \tilde{R}(\bm{w}_R)$ hold for $\bm{w} \in \wset_{\neq}$, where $\tilde{L}(\cdot)$ and $\tilde{R}(\cdot)$ mean $L(\cdot)$ and $R(\cdot)$ in the sense of \cite{DKW09}.

The main theorem is as follows. This is a generalization of \cite[Observation 9 and Proposition 14]{DKW09}.

\begin{theorem}
\label{thm:outcome}
For any $\bm{w} \in \wset_+$, the following statements (i)--(iii) hold.
\begin{enumerate}[(i)]
\item If $\bm{w} \in \wset_=$, then
\begin{align}
L(\bm{w}) =
\begin{cases}
0 &\,\,\text{if}\,\, o(\bm{w}_{\ast}) = \LL,\\
\infty &\,\,\text{if}\,\, o(\bm{w}_{\ast}) = \RR,\\
0 &\,\,\text{if}\,\, o(\bm{w}_{\ast}) = \PP,\\
1 &\,\,\text{if}\,\, o(\bm{w}_{\ast}) = \NN,
\end{cases} \qquad
R(\bm{w}) =
\begin{cases}
\infty &\,\,\text{if}\,\, o(\bm{w}_{\ast}) = \LL,\\
0 &\,\,\text{if}\,\, o(\bm{w}_{\ast}) = \RR,\\
0 &\,\,\text{if}\,\, o(\bm{w}_{\ast}) = \PP,\\
1 &\,\,\text{if}\,\, o(\bm{w}_{\ast}) = \NN.
\end{cases}
\label{eq:1dt0k5g1gy5l}
\end{align}

\item If $\bm{w} \in \wset_{\neq}$, then
\begin{align}
L(\bm{w}) &=
\begin{cases}
0 &\,\,\text{if}\,\, \beta(\bm{w}) \leq R(\bm{w}_L),\\
L(\bm{w}_R) + (\beta(\bm{w})-R(\bm{w}_L)) &\,\,\text{if}\,\, \beta(\bm{w}) > R(\bm{w}_L),
\end{cases} \label{eq:l7amcfizeu8d}\\
R(\bm{w}) &=
\begin{cases}
0 &\,\,\text{if}\,\, \alpha(\bm{w}) \leq L(\bm{w}_R),\\
R(\bm{w}_L) + (\alpha(\bm{w})-L(\bm{w}_R)) &\,\,\text{if}\,\, \alpha(\bm{w}) > L(\bm{w}_R).
\end{cases}
\end{align}

\item We have
\begin{align}
o(\bm{w}) = 
\begin{cases}
\LL &\,\,\text{if}\,\, \alpha(\bm{w}) > L(\bm{w}), \beta(\bm{w}) \leq R(\bm{w}),\\
\RR &\,\,\text{if}\,\, \alpha(\bm{w}) \leq L(\bm{w}), \beta(\bm{w}) > R(\bm{w}),\\
\NN &\,\,\text{if}\,\, \alpha(\bm{w}) > L(\bm{w}), \beta(\bm{w}) > R(\bm{w}),\\
\PP &\,\,\text{if}\,\, \alpha(\bm{w}) \leq L(\bm{w}), \beta(\bm{w}) \leq R(\bm{w}).
\end{cases}
\label{eq:8nm1tpxiz7d2}
\end{align}
\end{enumerate}
\end{theorem}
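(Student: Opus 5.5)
Proof plan. I will prove (i)--(iii) together by induction on the total number of stones outside the target piles. More precisely, the induction runs on the pair $(\nu(\bm{w}), \sum_i w_i)$, ordered so that $\bm{w}_L,\bm{w}_R,\bm{w}_\ast$ are strictly smaller than $\bm{w}$. This also makes them smaller than every position $\ppos{\bm{w}}{\gamma}{\cdot}$, $\ppos{\bm{w}}{\cdot}{\gamma}$ with $\gamma\ge 1$, since those positions have the same non-target piles. The first step is a \emph{monotonicity} observation. For fixed non-target piles, $\ppos{\bm{w}}{\gamma}{\cdot}$ gains Left options and leaves Right's structure essentially unchanged as $\gamma$ grows. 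I will establish this for $\bm{w}\in\wset_{\neq}$ from the recursion itself, and for $\bm{w}\in\wset_=$ from Remark~\ref{rem:seq}. It shows that ``Left wins moving first'' is upward closed in $\alpha$, and likewise for Right in $\beta$. With this in hand, (iii) is essentially a restatement of Definition~\ref{def:thre}, once one reads $L(\bm{w})$ as the largest $\alpha$ for which Left moving first loses. That reading must be checked against the definition, which uses the minimum $\gamma$ with $\ppos{\bm{w}}{\gamma}{\cdot}\in\LL\cup\PP$. Hence I will first pin down the exact meaning: $\ppos{\bm{w}}{\gamma}{\cdot}\in\LL\cup\PP$ means Right moving first loses there. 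So $\alpha>L(\bm{w})$ says that Left has an option from which Right moving first loses, namely $\ppos{\bm{w}}{L(\bm{w})}{\cdot}$. Conversely, $\alpha\le L(\bm{w})$ says that every Left option $\gamma<\alpha$ lies outside $\LL\cup\PP$. With this interpretation, (iii) is immediate and requires no monotonicity.

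For (i), I use Remark~\ref{rem:seq}: $\pos{\bm{w}}{\gamma}=(\text{single pile }\gamma)\seq\bm{w}_\ast$. A single Nim pile $\gamma$ is $\PP$ for $\gamma=0$ and $\NN$ for $\gamma\ge1$, with misère outcome $\NN$ for $\gamma=0$ and $\gamma\ge 2$, and $\PP$ for $\gamma=1$. Plugging into \eqref{eq:t1ujq4inra3j}, I will read off for which $\gamma$ the position lies in $\LL\cup\PP$, and take the minimum:
\begin{itemize}
\item if $o(\bm{w}_\ast)=\LL$, every $\gamma$ qualifies, so $L(\bm{w})=0$;
\item if $o(\bm{w}_\ast)=\RR$, no $\gamma$ qualifies, so $L(\bm{w})=\infty$;
\item if $o(\bm{w}_\ast)=\PP$, the outcome is $o^+(\text{pile})$, which is $\PP$ at $\gamma=0$, so $L(\bm{w})=0$;
\item if $o(\bm{w}_\ast)=\NN$, the outcome is $o^-(\text{pile})$, which is $\NN$ at $\gamma=0$ and $\PP$ at $\gamma=1$, so $L(\bm{w})=1$.
\end{itemize}
$R(\bm{w})$ is symmetric.

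For (ii), the substantive part, take $\bm{w}\in\wset_{\neq}$ and set $\beta=\beta(\bm{w})$. I must decide, for which $\gamma$, Right moving first loses in $\ppos{\bm{w}}{\gamma}{\beta}$. Right's options are $\ppos{\bm{w}}{\gamma}{\beta'}$ for $\beta'<\beta$. I will prove, by an inner induction on $\gamma+\beta$ using Lemma~\ref{lem:neq-lr}, the following closed form: Left moving first wins $\ppos{\bm{w}}{\alpha}{\beta}$ iff $\alpha>L$ computed by the formula. When $\gamma=0$, Lemma~\ref{lem:neq-lr}(ii) gives $\ppos{\bm{w}}{0}{\beta}=\ppos{\bm{w}_L}{\cdot}{\beta}$, and the outer hypothesis applied to $\bm{w}_L$ with threshold $R(\bm{w}_L)$ says this lies in $\LL\cup\PP$ iff $\beta\le R(\bm{w}_L)$. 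That gives the first case. When $\beta>R(\bm{w}_L)$, the formula reflects a ``race'': both players take one stone at a time. Right must get his pile down to $R(\bm{w}_L)$ before Left empties hers, and Left must keep more than $L(\bm{w}_R)$ stones until Right's pile is exhausted. This is exactly the End-Nim argument of \cite{DKW09}.

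Concretely, I will verify the formula through the following claims, each checked by the inner induction.
\begin{itemize}
\item For $\gamma\ge L(\bm{w}_R)+(\beta-R(\bm{w}_L))$, every Right move $\beta'<\beta$ has a Left reply. If $\beta'\le R(\bm{w}_L)$, Left replies by emptying her pile. Otherwise Left replies by removing $\beta-\beta'$ stones, hence at most that many. If $\beta'=0$, Left uses $\gamma>L(\bm{w}_R)$ via Lemma~\ref{lem:neq-lr}(ii) and (iii) applied to $\bm{w}_R$.
\item For smaller $\gamma$, Right has a winning move: take one stone, or empty his pile, as appropriate.
\end{itemize}
The symmetric formula for $R(\bm{w})$ follows by swapping roles.

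The main obstacle is the bookkeeping in (ii). I must verify that the threshold $L(\bm{w})$ and the analogous thresholds at the intermediate positions $\ppos{\bm{w}}{\gamma}{\beta'}$ are consistent with the claimed linear form, and handle $R(\bm{w}_L)=\infty$ or $L(\bm{w}_R)=\infty$ (giving $L(\bm{w})=0$ or $\infty$ respectively). I would first try to prove a strengthened joint statement: $\ppos{\bm{w}}{\alpha}{\beta}\in\LL\cup\PP$ iff $\beta\le R(\bm{w}_L)$ or $\alpha\ge L(\bm{w}_R)+\beta-R(\bm{w}_L)$. I would do this by simultaneous induction on $\alpha+\beta$, and derive both formulas of (ii) from it.
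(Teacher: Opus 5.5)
Your handling of (i) and (iii) is the paper's own. For (i) you use the sequential decomposition of Remark~\ref{rem:seq} together with the normal and mis\`ere outcomes of a single pile. For (iii) you use the observation behind Lemma~\ref{lem:thre}: Left moving first wins iff one of her options lies in $\LL\cup\PP$.

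Your two bulleted claims for (ii) are also sound. For $\beta>R(\bm{w}_L)$ they establish $\ppos{\bm{w}}{\gamma}{\beta}\in\LL\cup\PP$ iff $\gamma\ge L(\bm{w}_R)+\beta-R(\bm{w}_L)$. The inner induction only ever calls on two kinds of positions: those settled directly by Lemma~\ref{lem:thre} applied to $\bm{w}_L$ or $\bm{w}_R$ via Lemma~\ref{lem:neq-lr}(ii), and those whose Right pile is still larger than $R(\bm{w}_L)$. So this is a reorganisation of the $\PP$ and $\LL$ cases of Lemma~\ref{lem:tri-point}. One ordering fix: in the first claim, split off $\beta'=0$ before the case $\beta'\le R(\bm{w}_L)$. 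Emptying Left's pile there lands in $\bm{w}_{LR}$ with Right to move, which need not be a loss for Right.

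The step that fails is the ``strengthened joint statement'' you plan to prove by simultaneous induction on $\alpha+\beta$. It is false whenever $1\le\alpha\le L(\bm{w}_R)$ and $1\le\beta\le R(\bm{w}_L)$. In that region Right, moving first, empties his pile and reaches $\ppos{\bm{w}}{\alpha}{0}=\ppos{\bm{w}_R}{\alpha}{\cdot}$, which lies in $\RR\cup\PP$ by Lemma~\ref{lem:thre}(a). Hence the position is not in $\LL\cup\PP$; it is the $\NN$ case of Lemma~\ref{lem:tri-point}.

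A concrete instance: take $n=3$, $\bm{\sigma}=(2,1,3)$, $\bm{w}=(1,1,1)$.
\begin{itemize}
\item $\bm{w}_L=(0,1,1)$ and $\bm{w}_R=(1,0,1)$ lie in $\wset_=$, with $\bm{w}_{L\ast}=\bm{w}_{R\ast}=(0,0,1)\in\NN$.
\item By (i), $R(\bm{w}_L)=L(\bm{w}_R)=1$.
\item Your statement predicts $(1,1,1)\in\LL\cup\PP$, because $\beta=1\le R(\bm{w}_L)$.
\item But Right moving first empties pile $2$, Left is forced to empty pile $1$, and Right takes pile $3$ and wins.
\end{itemize}
The conceptual slip is treating $\{\gamma:\ppos{\bm{w}}{\gamma}{\cdot}\in\LL\cup\PP\}$ as the up-set $[L(\bm{w}),\infty)$; in this example it is $\{0,2,3,\dots\}$. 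Lemma~\ref{lem:thre} gives monotonicity only for ``Left moving first wins'' with $\gamma\ge 1$. That is why your other formulation (Left moving first wins $\ppos{\bm{w}}{\alpha}{\beta}$, $\alpha\ge1$, iff $\alpha$ exceeds the formula) is correct, while the $\LL\cup\PP$ version is not equivalent to it.

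The repair is to state the joint claim only for $\beta>R(\bm{w}_L)$, and symmetrically the $R$-version only for $\alpha>L(\bm{w}_R)$. That is all (ii) needs: when $\beta(\bm{w})\le R(\bm{w}_L)$, the value $L(\bm{w})=0$ is read off from $\alpha=0$ alone. Alternatively, prove the full four-case classification as the paper does in Lemma~\ref{lem:tri-point}.
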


Theorem \ref{thm:outcome} yields the following recursive algorithm to obtain the outcome of a given position $\bm{w}$.
For the base case $\nu(\bm{w}) = 1$, the position $\bm{w}$ is in $\wset_=$, and thus
we have $L(\bm{w}) = R(\bm{w}) = 0$ from $o(\bm{w}_{\ast}) = o(\bm{0}) = \PP$ and Theorem \ref{thm:outcome} (i); this leads to $o(\bm{w}) = \NN$ by Theorem \ref{thm:outcome} (iii).
For the case $\nu(\bm{w}) \geq 2$, we obtain $L(\bm{w})$ and $R(\bm{w})$ from the information on $\bm{w}_L$ and $\bm{w}_R$ by applying Theorem \ref{thm:outcome} (i) and (ii) corresponding to whether $\bm{w} \in \wset_=$ or $\bm{w} \in \wset_{\neq}$.
Then $o(\bm{w})$ is obtained from $L(\bm{w})$ and $R(\bm{w})$ by Theorem \ref{thm:outcome} (iii).

Let $\sset{\bm{w}}$ denote the set of all positions that can be reached by repeatedly applying any number of operations in the form $\bm{w}' \mapsto \bm{w}'_L$ or $\bm{w}' \mapsto \bm{w}'_R$ in any order.
To compute $o(\bm{w})$ by using Theorem \ref{thm:outcome}, we must compute $L(\bm{w}')$ and $R(\bm{w}')$ recursively for all $\bm{w}' \in \sset{\bm{w}}$.
By definition, a position $\bm{w}' \in \sset{\bm{w}}$ is in the form obtained by replacing some elements of $\bm{w}$ with $0$.

Let $\bm{w}' = (w'_1, w'_2, \ldots, w'_n) \in \sset{\bm{w}}$ satisfy $\left(l(\bm{w}'), \sigma(r(\bm{w}'))\right) = (x, y)$.
Then
\begin{align}
w'_1 = w'_2 =  \cdots =  w'_{x-1} = 0,\quad
w'_{\sigma^{-1}(1)} = w'_{\sigma^{-1}(2)} = \cdots = w'_{\sigma^{-1}(y-1)} = 0, \label{eq:leht74hx6qzo}
\end{align}
and the other elements are not changed from the original position $\bm{w}$ by the definition of the ruleset.
Therefore, for a given $(x, y) \in [n] \times [n]$, the position $\bm{w}' \in \sset{\bm{w}}$ such that $\left(l(\bm{w}'), \sigma(r(\bm{w}'))\right) = (x, y)$ is uniquely determined if it exists.
Then we write it as $\spos{\bm{w}}xy$.
By \eqref{eq:leht74hx6qzo}, for $\spos{\bm{w}}xy$ to exist,
it must hold that
\begin{align}
x \not \in \{\sigma^{-1}(1), \sigma^{-1}(2), \ldots, \sigma^{-1}(y-1)\},\quad
\sigma^{-1}(y) \not \in \{1, 2, \ldots, x-1\}.
\end{align}
Therefore, the condition for $\spos{\bm{w}}xy$ to exist is $\sigma(x) \geq y$ and $x \leq \sigma^{-1}(y)$,
which is equivalent to $x \leq x'$ and $\sigma(x) \geq \sigma(x')$, where $x' \coloneqq \sigma^{-1}(y)$.
Namely, $\sset{\bm{w}} = \{\spos{\bm{w}}{x}{\sigma(x')} : (x, x') \in [n] \times [n], x \leq x', \sigma(x) \geq \sigma(x')\}$, 
whose cardinality is given as $\mathrm{inv}(\bm{\sigma}) + n \leq n(n+1)/2$,
where $\mathrm{inv}(\bm{\sigma})$ denotes the inversion number of the permutation $\bm{\sigma}$.
In particular, we can compute $o(\bm{w})$ by applying the recursive formula of Theorem \ref{thm:outcome} $O(n^2)$ times.

In Figs.~\ref{fig:general-thre} and \ref{fig:serial-end}, for each $\spos{\bm{w}}x{\sigma(x')} \in \sset{\bm{w}}$ with $x < x'$,
a point $\bullet$ is drawn at the coordinate $(x, \sigma(x'))$ corresponding to $\spos{\bm{w}}x{\sigma(x')}$.
Equivalently, there is a $\bullet$ at each intersection point of the half-lines extending left and down from every $\bigcirc$.
Also, note that the point corresponding to each $\spos{\bm{w}}x{\sigma(x')} \in \sset{\bm{w}}$ with $x = x'$ is the $\bigcirc$ at the coordinate $(x, \sigma(x'))$.
Therefore, a $\bigcirc$ corresponds to both the $x$-th pile and the position $\spos{\bm{w}}x{\sigma(x)} \in \sset{\bm{w}}$.

The remaining non-empty piles in a position $\spos{\bm{w}}x{\sigma(x')} \in \sset{\bm{w}}$ are exactly the non-empty piles corresponding to the $\bigcirc$s located in the upper-right region starting from the coordinate $(x, \sigma(x'))$.

A position $\bm{w}'$ corresponding to a $\bullet$ belongs to $\wset_{\neq}$;
the destination of the right (resp.~up) arrow going from it corresponds to $\bm{w}'_L$ (resp.~$\bm{w}'_R$);
the values of $L(\bm{w}')$ and $ R(\bm{w}')$ are indicated as labels of the arrows as shown in the left figure of Fig.~\ref{fig:label}.
A position corresponding to a $\bigcirc$ belongs to $\wset_=$;
the destination of the diagonal arrow going from it corresponds to $\bm{w}'_{\ast}$, and if the diagonal arrow has no destination, then $\bm{w}'_{\ast} = \bm{0}$;
the values $L(\bm{w}')$ and $R(\bm{w}')$ are indicated as labels of the arrows as shown in the right figure of Fig.~\ref{fig:label}.

\begin{figure}[H]
\centering
\includegraphics[keepaspectratio,scale=1.5]{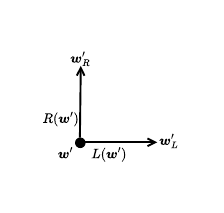}
\includegraphics[keepaspectratio,scale=1.5]{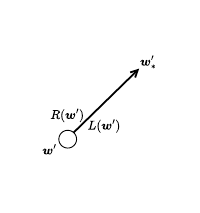}
\caption{The labels and destinations of arrows in the figures}
\label{fig:label}
\end{figure}

\begin{example}
Let $\bm{\sigma} = (10, 1, 3, 9, 2, 5, 7, 6, 8, 4)$ and $\bm{w} = (2, 3, 1, 5, 2, 3, 1, 3, 2, 3)$ as shown in Fig.~\ref{fig:general-thre}.
We suppose that it is known that
\begin{align}
(L(\bm{w}_R), R(\bm{w}_R)) = (7, 0), \quad (L(\bm{w}_{L\ast}), R(\bm{w}_{L\ast})) = (\infty, 0),
\end{align}
and we show the last few steps of the recursive algorithm to obtain $o(\bm{w})$.
Since
$\alpha(\bm{w}_{L\ast}) = w_3 = 1 \leq \infty = L(\bm{w}_{L\ast})$ and $\beta(\bm{w}_{L\ast}) = w_5 = 2 > 0 = R(\bm{w}_{L\ast})$,
Theorem \ref{thm:outcome} (iii) leads to
\begin{align}
o(\bm{w}_{L\ast}) = \RR.
\end{align}
Therefore, we obtain
\begin{align}
(L(\bm{w}_L), R(\bm{w}_L)) = (\infty, 0)
\end{align}
by applying Theorem \ref{thm:outcome} (i).
Since $\beta(\bm{w}) = w_2 =  3 > 0 = R(\bm{w}_L)$, we have
\begin{align}
L(\bm{w}) = L(\bm{w}_R) + (\beta(\bm{w}) - R(\bm{w}_L)) = 7 + (3-0) = 10
\end{align} by Theorem \ref{thm:outcome} (ii).
Since $\alpha(\bm{w}) = w_1 = 2 < 7 = L(\bm{w}_R)$, we have
\begin{align}
R(\bm{w}) = 0
\end{align}
by Theorem \ref{thm:outcome} (ii).
Therefore, we see that $\alpha(\bm{w}) = w_1 = 2 \leq 10 = L(\bm{w})$ and $\beta(\bm{w}) = w_2 = 3 > 0 = R(\bm{w})$,
so that
\begin{align}
o(\bm{w}) = \RR
\end{align}
by applying Theorem \ref{thm:outcome} (iii).
\end{example}

\subsection{Proof of Theorem \ref{thm:outcome}}

We first see that $L(\bm{w})$ and $R(\bm{w})$ in Definition \ref{def:thre} indeed provide the thresholds for the number of stones in the target pile that allows the player to win when playing first, as the following lemma.

\begin{lemma}
\label{lem:thre}
For any $\bm{w} \in \wset_+$ and $\gamma \in \pint$, the following equivalences (a) and (b) hold.
\begin{enumerate}[(a)]
\item $\ppos{\bm{w}}{\gamma}{\cdot} \in \LL \cup \NN \iff \gamma > L(\bm{w})$.
\item $\ppos{\bm{w}}{\cdot}{\gamma} \in \RR \cup \NN \iff \gamma > R(\bm{w})$.
\end{enumerate}
\end{lemma}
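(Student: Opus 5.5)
The plan is to prove (a) directly from the definition of $L(\bm{w})$ and the basic recursive characterization of outcome classes; (b) then follows by the symmetric argument with the roles of Left and Right exchanged. Recall the standard fact that a position $G$ satisfies $G \in \LL \cup \NN$ (Left wins moving first) if and only if some Left option $G^L$ satisfies $G^L \in \LL \cup \PP$ (Left wins moving second from $G^L$).

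The first step is to check that replacing the content of Left's target pile by a positive number does not change which pile is her target. Fix $\bm{w} \in \wset_+$ and $\gamma \in \pint$, and put $\bm{v} \coloneqq \ppos{\bm{w}}{\gamma}{\cdot}$. Since $w_i = 0$ for all $i < l(\bm{w})$ and the $l(\bm{w})$-th entry of $\bm{v}$ is $\gamma > 0$, we get $l(\bm{v}) = l(\bm{w})$ and $\alpha(\bm{v}) = \gamma$. Consequently, for every $\gamma' \in \uint$ we have $\ppos{\bm{v}}{\gamma'}{\cdot} = \ppos{\bm{w}}{\gamma'}{\cdot}$. Both sides are $\bm{w}$ with its $l(\bm{w})$-th entry replaced by $\gamma'$, and this includes the case $\gamma' = 0$. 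By the definition of the ruleset, the Left options of $\bm{v}$ are therefore exactly the positions $\ppos{\bm{w}}{\gamma'}{\cdot}$ for $0 \leq \gamma' < \gamma$.

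Combining this with the fact recalled above gives
\[
\bm{v} \in \LL \cup \NN \iff \exists\, \gamma' \in \{0, 1, \ldots, \gamma-1\} \text{ with } \ppos{\bm{w}}{\gamma'}{\cdot} \in \LL \cup \PP .
\]
The right-hand side says exactly that the set $\{\gamma' \in \uint : \ppos{\bm{w}}{\gamma'}{\cdot} \in \LL \cup \PP\}$ has an element smaller than $\gamma$. That is equivalent to its minimum $L(\bm{w})$ being smaller than $\gamma$. When the set is empty, $L(\bm{w}) = \infty$ and both sides are false, so the convention $L(\bm{w}) = \infty$ is consistent. This proves (a).

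For (b), we use the same argument with $r(\cdot)$, $\beta(\cdot)$, $\ppos{\bm{w}}{\cdot}{\gamma}$, $R(\bm{w})$, and the fact that Right wins moving first if and only if some Right option is in $\RR \cup \PP$. The only step that needs any care is the first one: checking that $r(\ppos{\bm{w}}{\cdot}{\gamma}) = r(\bm{w})$ for $\gamma \geq 1$. This holds because the piles with smaller $\sigma$-value than $r(\bm{w})$ are all empty. Once that is established, the options of the modified position coincide with the family appearing in Definition \ref{def:thre}, and the rest is immediate.
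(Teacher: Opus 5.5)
Your proposal is correct and follows essentially the same route as the paper: both arguments use the fact that Left wins moving first from $\ppos{\bm{w}}{\gamma}{\cdot}$ exactly when some Left option $\ppos{\bm{w}}{\gamma'}{\cdot}$ with $0 \leq \gamma' < \gamma$ lies in $\LL \cup \PP$, and this holds exactly when $L(\bm{w}) < \gamma$. Your explicit check that $l(\ppos{\bm{w}}{\gamma}{\cdot}) = l(\bm{w})$ for $\gamma \geq 1$ (and likewise for $r$) fills in a detail that the paper uses without stating it.
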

Note that $\gamma > L(\bm{w})$ and $\gamma > R(\bm{w})$ in the statements above
imply $L(\bm{w}) \neq \infty$ and $R(\bm{w}) \neq \infty$, respectively, since $\gamma$ is an integer.

\begin{proof}[Proof of Lemma \ref{lem:thre}]
By symmetry, it suffices to prove only the equivalence (a).

($\implies$)
Since $\ppos{\bm{w}}{\gamma}{\cdot} \in \LL \cup \NN$, the position $\ppos{\bm{w}}{\gamma}{\cdot}$ has a Left option $\ppos{\bm{w}}{\gamma'}{\cdot} \in \LL \cup \PP$ with $0 \leq \gamma' < \gamma$, so that $L(\bm{w}) \leq \gamma' < \gamma$.

($\impliedby$)
By $\gamma > L(\bm{w})$, the position $\ppos{\bm{w}}{\gamma}{\cdot}$ has the Left option $\ppos{\bm{w}}{L(\bm{w})}{\cdot}$,
which is in $\LL \cup \PP$ by Definition \ref{def:thre} (i).
\end{proof}

The next lemma describes a relation between the outcome $o(\bm{w})$ and the thresholds for $\bm{w}_L$ and $\bm{w}_R$ in the case $\bm{w} \in \wset_{\neq}$.

\begin{lemma}
\label{lem:tri-point}
For any $\bm{w} \in \wset_{\neq}$ and $\alpha, \beta \in \pint$, we have
\begin{align}
o(\ppos{\bm{w}}{\alpha}{\beta}) =
\begin{cases}
\NN &\,\,\text{if}\,\, \alpha \leq L(\bm{w}_R), \beta \leq R(\bm{w}_L),\\
\PP &\,\,\text{if}\,\, \alpha = L(\bm{w}_R) + \gamma, \beta = R(\bm{w}_L) + \gamma \,\,\text{for some}\,\, \gamma \in \pint,\\
\LL &\,\,\text{if}\,\, \alpha = L(\bm{w}_R) + \gamma, \beta < R(\bm{w}_L) + \gamma \,\,\text{for some}\,\, \gamma \in \pint,\\
\RR &\,\,\text{if}\,\, \alpha < L(\bm{w}_R) + \gamma, \beta = R(\bm{w}_L) + \gamma \,\,\text{for some}\,\, \gamma \in \pint.
\end{cases}
\end{align}
\end{lemma}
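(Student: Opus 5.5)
Write $a = L(\bm{w}_R)$ and $b = R(\bm{w}_L)$. We work with $\bm{v} = \ppos{\bm{w}}{\alpha}{\beta}$ for $\alpha,\beta \geq 1$. Then $\bm{v} \in \wset_{\neq}$, and by Lemma \ref{lem:neq-lr} we have $\bm{v}_L = \ppos{\bm{w}_L}{\cdot}{\beta}$ and $\bm{v}_R = \ppos{\bm{w}_R}{\alpha}{\cdot}$.

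Left's options of $\bm{v}$ are $\ppos{\bm{w}}{\alpha'}{\beta}$ for $0 \le \alpha' < \alpha$. Right's options are $\ppos{\bm{w}}{\alpha}{\beta'}$ for $0 \le \beta' < \beta$. The key observation is that the two "boundary" options are controlled by the earlier thresholds via Lemma \ref{lem:thre}:
\begin{itemize}
\item Left's option $\ppos{\bm{w}}{0}{\beta} = \ppos{\bm{w}_L}{\cdot}{\beta}$ lies in $\RR \cup \NN$ iff $\beta > b$. Equivalently, it is a winning Left move (it lies in $\LL \cup \PP$) iff $\beta \le b$.
\item Right's option $\ppos{\bm{w}}{\alpha}{0} = \ppos{\bm{w}_R}{\alpha}{\cdot}$ is a winning Right move iff $\alpha \le a$.
\end{itemize}

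The proof is by induction on $\alpha+\beta$, proving all four cases simultaneously. The four cases partition $\pint^2$: writing $\gamma = \alpha - a$ or $\gamma = \beta - b$, exactly one of the following holds:
\begin{itemize}
\item both $\alpha \le a$ and $\beta \le b$;
\item $\alpha - a = \beta - b \ge 1$;
\item $\alpha - a > \max(0, \beta - b)$;
\item $\beta - b > \max(0, \alpha - a)$.
\end{itemize}
Here the third and fourth cases are the $\LL$ and $\RR$ cases, since $\beta < b + \gamma$ with $\gamma = \alpha - a \ge 1$. If $a$ or $b$ is infinite, the cases with $\gamma$ simply cannot occur for that coordinate, and the partition still holds.

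We then determine who wins moving first, case by case.
\begin{itemize}
\item \textbf{Case $\alpha \le a$, $\beta \le b$.} Both players win first by emptying their target pile, so $\bm{v} \in \NN$.
\item \textbf{Left moving first when $\alpha - a > \max(0, \beta - b)$.} If $\beta \le b$, Left wins with $\alpha' = 0$. Otherwise Left moves to $\alpha' = a + (\beta - b) < \alpha$, with $\alpha' \ge 1$. By induction this option is in $\PP$, so Left wins.
\item \textbf{Right moving first in the same case.} Every Right option fails. The option $\beta' = 0$ loses because $\alpha > a$. An option with $\beta' \ge 1$, $\beta' < \beta$ still satisfies $\alpha - a > \beta' - b$, so it is in $\LL$ by induction. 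Hence $\bm{v} \in \LL$.
\item \textbf{Case $\beta - b > \max(0, \alpha - a)$.} This is symmetric and gives $\RR$.
\item \textbf{Case $\alpha - a = \beta - b = \gamma \ge 1$.} Both boundary moves lose, since $\beta > b$ and $\alpha > a$. Any interior Left move to $\alpha' \ge 1$ gives $\alpha' - a < \beta - b$ with $\beta - b \ge 1$. By induction that option lies in the $\RR$ case or, when $\alpha' \le a$, also in the $\RR$ case, since then $\beta - b > 0 \ge \alpha' - a$. So every Left move loses. Symmetrically every Right move loses, and $\bm{v} \in \PP$.
\end{itemize}

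The main obstacle is careful bookkeeping. We must check that the induction hypothesis covers options with $\alpha' \le a$, which fall in the $\RR$ region and not the $\NN$ one because $\beta > b$. We must also handle $a$ or $b$ equal to $\infty$, where Lemma \ref{lem:thre} gives "never wins" for the boundary move. Once the partition is verified, each case is a direct check.
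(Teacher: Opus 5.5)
Your proof is correct and follows essentially the paper's route. The boundary moves $\alpha'=0$ and $\beta'=0$ are controlled by $R(\bm{w}_L)$ and $L(\bm{w}_R)$ through Lemma \ref{lem:neq-lr} (ii) and Lemma \ref{lem:thre}, the diagonal $\alpha - L(\bm{w}_R) = \beta - R(\bm{w}_L) \geq 1$ consists of $\PP$-positions, and the $\LL$ and $\RR$ regions are won by moving onto that diagonal. The only difference is bookkeeping: you use one strong induction on $\alpha+\beta$ and classify Left's interior moves in the $\PP$ case as $\RR$-positions, whereas the paper inducts on $\gamma$ for the $\PP$ case alone, exhibiting Right's explicit replies, and then derives the $\LL$ and $\RR$ cases directly from it.
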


The proof of Lemma \ref{lem:tri-point} is essentially identical to the proof of \cite[Proposition 11]{DKW09}.

\begin{proof}[Proof of Lemma \ref{lem:tri-point}]
We consider the four cases separately as follows.

\begin{itemize}
\item The case $\alpha \leq L(\bm{w}_R), \beta \leq R(\bm{w}_L)$:
We have $\ppos{\bm{w}}{\alpha}{\beta} \in \LL \cup \NN$ because it has the Left option $\ppos{\bm{w}}{0}{\beta}$,
which satisfies
\begin{align}
\ppos{\bm{w}}{0}{\beta} \eqlab{A}= \ppos{\bm{w}_L}{\cdot}{\beta} \eqlab{B}\in \LL \cup \PP,
\end{align}
where
(A) follows from Lemma \ref{lem:neq-lr} (ii),
and (B) follows from $\beta \leq R(\bm{w}_L)$ and Lemma \ref{lem:thre} (b).
By symmetry, the assumption $\alpha \leq L(\bm{w}_R)$ yields $\ppos{\bm{w}}{\alpha}{\beta} \in \RR \cup \NN$, so that $\ppos{\bm{w}}{\alpha}{\beta} \in \NN$ as desired.

\item The case where $\alpha = L(\bm{w}_R) + \gamma, \beta = R(\bm{w}_L) + \gamma$ for some $\gamma \in \pint$:
We prove $\ppos{\bm{w}}{\alpha}{\beta} \in \PP$ by induction on $\gamma$.
By symmetry, it suffices to show that $\ppos{\bm{w}}{\alpha}{\beta} \in \RR \cup \PP$, that is, Left loses in $\ppos{\bm{w}}{\alpha}{\beta}$ playing first.
Let $\ppos{\bm{w}}{\alpha'}{\beta}$ be an arbitrary Left option of $\ppos{\bm{w}}{\alpha}{\beta}$, where $0 \leq \alpha' < \alpha$.
We consider the following three cases.
\begin{itemize}
\item The case $\alpha' = 0$: We have
\begin{align}
\ppos{\bm{w}}{\alpha'}{\beta}
= \ppos{\bm{w}}{0}{\beta}
\eqlab{A}= \ppos{\bm{w}_L}{\cdot}{\beta}
\eqlab{B}\in \RR\cup\NN,
\end{align}
where
(A) follows from Lemma \ref{lem:neq-lr} (ii),
and (B) follows from $\beta = R(\bm{w}_L) + \gamma > R(\bm{w}_L)$ and Lemma \ref{lem:thre} (b).
\item The case $0 < \alpha' \leq L(\bm{w}_R)$: The position $\ppos{\bm{w}}{\alpha'}{\beta}$ is in $\RR \cup \NN$ because it has the Right option $\ppos{\bm{w}}{\alpha'}{0}$,
which satisfies
\begin{align}
\ppos{\bm{w}}{\alpha'}{0} \eqlab{A}= \ppos{\bm{w}_R}{\alpha'}{\cdot} \eqlab{B}\in \RR \cup \PP,
\end{align}
where (A) follows from Lemma \ref{lem:neq-lr} (ii),
and (B) follows from $\alpha' \leq L(\bm{w}_R)$ and Lemma \ref{lem:thre} (a).

\item The case where $\alpha' = L(\bm{w}_R) + \gamma'$ for some $\gamma'$ with $0 < \gamma' < \gamma$:
The position $\ppos{\bm{w}}{\alpha'}{\beta} = \ppos{\bm{w}}{L(\bm{w}_R) + \gamma'}{R(\bm{w}_L) + \gamma}$ is in $\RR \cup \NN$ because it has the Right option $\ppos{\bm{w}}{L(\bm{w}_R) + \gamma'}{R(\bm{w}_L) + \gamma'}$,
which is in $\PP$ by the induction hypothesis on $\gamma$.
\end{itemize}

\item The case where $\alpha = L(\bm{w}_R) + \gamma, \beta < R(\bm{w}_L) + \gamma$ for some $\gamma \in \pint$:
To show $\ppos{\bm{w}}{\alpha}{\beta} \in \LL$, we show that Left wins in $\ppos{\bm{w}}{\alpha}{\beta}$ for both cases where Left plays first and the case where Left plays second, as follows.
\begin{itemize}
\item The case where Left plays first:
If $\beta \leq R(\bm{w}_L)$, then Left has the move to $\ppos{\bm{w}}{0}{\beta}$,
which satisfies
\begin{align}
\ppos{\bm{w}}{0}{\beta}
\eqlab{A}= \ppos{\bm{w}_L}{\cdot}{\beta}
\eqlab{B}\in \LL \cup \PP,
\end{align}
where
(A) follows from Lemma \ref{lem:neq-lr} (ii),
and (B) follows from $\beta \leq R(\bm{w}_L)$ and Lemma \ref{lem:thre} (b).
If $\beta = R(\bm{w}_L) + \gamma'$ with $0 < \gamma' < \gamma$,
then Left has the move from $\ppos{\bm{w}}{\alpha}{\beta} = \ppos{\bm{w}}{L(\bm{w}_{R})+\gamma}{R(\bm{w}_L) + \gamma'}$
to $\ppos{\bm{w}}{L(\bm{w}_{R})+\gamma'}{R(\bm{w}_L) + \gamma'}$, which is in $\PP$ as shown above.
\item The case where Left plays second:
Let $\ppos{\bm{w}}{\alpha}{\beta'}$ be an arbitrary Right option of $\ppos{\bm{w}}{\alpha}{\beta}$, where $\beta' < \beta$.
If $\beta' = 0$, then
\begin{align}
\ppos{\bm{w}}{\alpha}{\beta'}
= \ppos{\bm{w}}{\alpha}{0}
\eqlab{A}= \ppos{\bm{w}_R}{\alpha}{ \cdot}
\eqlab{B}\in \LL \cup \NN,
\end{align}
where
(A) follows from Lemma \ref{lem:neq-lr} (ii),
and (B) follows from $\alpha = L(\bm{w}_R) + \gamma > L(\bm{w}_R)$ and Lemma \ref{lem:thre} (a).
If $\beta' > 0$, then $\beta' < \beta < R(\bm{w}_L) + \gamma$ and thus it is reduced to the case where Left plays first shown above,
so that $\ppos{\bm{w}}{\alpha}{\beta'} \in \LL \cup \NN$ as desired.
\end{itemize}

\item The case where $\alpha < L(\bm{w}_R) + \gamma, \beta = R(\bm{w}_L) + \gamma$ for some $\gamma \in \pint$:
The assertion is shown by a symmetric argument to the above case $\alpha = L(\bm{w}_R) + \gamma, \beta < R(\bm{w}_L) + \gamma$.
\end{itemize}
\end{proof}

Using the above lemmas, we prove the desired theorem as follows.

\begin{proof}[Proof of Theorem \ref{thm:outcome}]
(Proof of (i)) 
By symmetry, we only show the first equation.
As stated in Remark \ref{rem:seq},
the position $\bm{w} \in \wset_=$ is represented as $\bm{w} = \bm{w}' \seq \bm{w}_{\ast}$ by
the position $\bm{w}' \coloneqq (0, 0, \ldots, 0, \alpha(\bm{w}), 0, 0, \ldots, 0)$ with only the $l(\bm{w})$-th (equivalently, $r(\bm{w})$-th) pile being non-empty and the remaining part $\bm{w}_{\ast}$.
The outcome of $\pos{\bm{w}'}{\gamma}$ is given as
\begin{align}
o^+(\pos{\bm{w}'}{\gamma}) = 
\begin{cases}
\PP &\,\,\text{if}\,\,\gamma = 0,\\
\NN &\,\,\text{if}\,\,\gamma \geq 1,
\end{cases}
\qquad
o^-(\pos{\bm{w}'}{\gamma}) = 
\begin{cases}
\NN &\,\,\text{if}\,\,\gamma = 0,\\
\PP &\,\,\text{if}\,\,\gamma = 1,\\
\NN &\,\,\text{if}\,\,\gamma \geq 2.
\end{cases}
\label{eq:ep9r308lb9f0}
\end{align}
Combining \eqref{eq:ep9r308lb9f0} and \eqref{eq:t1ujq4inra3j} yields the desired result.

(Proof of (ii)) By symmetry, we only show the first equation \eqref{eq:l7amcfizeu8d}.
We have
\begin{align}
\beta(\bm{w}) > R(\bm{w}_L)
\eqlab{A}\iff \ppos{\bm{w}_L}{\cdot}{\beta(\bm{w})} \in \RR \cup \NN
\eqlab{B}\iff \ppos{\bm{w}}{0}{\cdot} \in \RR \cup \NN,
\end{align}
where
(A) follows from Lemma \ref{lem:thre} (b),
and (B) follows from Lemma \ref{lem:neq-lr} (ii) and the assumption $\bm{w} \in \wset_{\neq}$.
Equivalently,
\begin{align}
\ppos{\bm{w}}{0}{\cdot} \in \LL \cup \PP \iff \beta(\bm{w})\leq R(\bm{w}_L).
\end{align}
Hence, by Definition \ref{def:thre} (i),
it holds that $L(\bm{w}) = 0$ if and only if $\beta(\bm{w}) \leq R(\bm{w}_L)$.
This shows the first case of \eqref{eq:l7amcfizeu8d}.
To show the other case, we suppose $\beta(\bm{w}) > R(\bm{w}_L)$. Then
for any $\alpha \in \pint$, we have
\begin{align}
\ppos{\bm{w}}{\alpha}{\cdot} \in \LL \cup \PP
\eqlab{A}\iff \alpha \geq L(\bm{w}_R) + (\beta(\bm{w}) - R(\bm{w}_L)),
\end{align}
where
(A) follows from Lemma \ref{lem:tri-point} with $\gamma \coloneqq \beta(\bm{w}) - R(\bm{w}_L) > 0$.
This shows $L(\bm{w}) = L(\bm{w}_R) + (\beta(\bm{w}) - R(\bm{w}_L))$ from Definition \ref{def:thre} (i).

(Proof of (iii)) Directly from Lemma \ref{lem:thre}.
\end{proof}

\section{The Uniform Case}
\label{sec:uniform}

In this section, we consider the particular case where
all non-empty piles have uniformly $m$ stones for a fixed constant $m$.
We define $\wset^{(m)}$ as the set of all such positions.
More formally, for $m \in \pint$, we define
$\wset^{(m)} \coloneqq \{(w_1, w_2, \ldots, w_n) \in \wset : \forall i \in [n], w_i \in \{0, m\}\}$.
Similarly, 
$\wset^{(m)}_+ \coloneqq \wset^{(m)} \cap \wset_+$,
$\wset^{(m)}_= \coloneqq \wset^{(m)} \cap \wset_=$,
$\wset^{(m)}_{\neq} \coloneqq \wset^{(m)} \cap \wset_{\neq}$.
Note that for any $\bm{w} \in \wset^{(m)}$, we have $\sset{\bm{w}} \subseteq \wset^{(m)}$.

\subsection{The Results for the Uniform Case}
The case $m = 1$ is easy since then each move removes a whole pile consisting of one stone,
and the outcome is determined only by the parity of the number of non-empty piles as follows.

\begin{proposition}
For any $\bm{w} \in \wset^{(1)}$, we have
\begin{align}
o^+(\bm{w}) =
\begin{cases}
\PP &\,\,\text{if}\,\,\nu(\bm{w}) \equiv 0 \pmod{2},\\
\NN &\,\,\text{if}\,\,\nu(\bm{w}) \equiv 1 \pmod{2},
\end{cases}\\
o^-(\bm{w}) =
\begin{cases}
\NN &\,\,\text{if}\,\,\nu(\bm{w}) \equiv 0 \pmod{2},\\
\PP &\,\,\text{if}\,\,\nu(\bm{w}) \equiv 1 \pmod{2}.
\end{cases}
\end{align}
\end{proposition}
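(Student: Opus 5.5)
The plan is to argue by induction on $\nu(\bm{w})$, using the observation that for $\bm{w} \in \wset^{(1)}$ every move is forced to be a removal of a whole pile. Indeed, for $\bm{w} \in \wset^{(1)}_+$ we have $\alpha(\bm{w}) = \beta(\bm{w}) = 1$. So the only Left option is $\bm{w}_L = \ppos{\bm{w}}{0}{\cdot}$, and the only Right option is $\bm{w}_R = \ppos{\bm{w}}{\cdot}{0}$. Both options lie in $\wset^{(1)}$ and satisfy $\nu(\bm{w}_L) = \nu(\bm{w}_R) = \nu(\bm{w}) - 1$. Consequently, regardless of which player moves or which pile is removed, every play from $\bm{w}$ lasts exactly $\nu(\bm{w})$ moves, with the players alternating.

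For normal play, I will carry out the induction as follows. In the base case $\nu(\bm{w}) = 0$ we have $\bm{w} = \bm{0}$, which is in $\PP$ since neither player can move. For the inductive step with $\nu(\bm{w}) \geq 1$, both options of $\bm{w}$ have $\nu = \nu(\bm{w}) - 1$, so the induction hypothesis gives them a common outcome.
\begin{itemize}
\item If $\nu(\bm{w})$ is odd, the options are in $\PP$. Hence the player to move wins, and $\bm{w} \in \NN$.
\item If $\nu(\bm{w})$ is even and positive, the options are in $\NN$. Whichever player moves first therefore hands the opponent an $\NN$-position and loses, so $\bm{w} \in \PP$.
\end{itemize}
Equivalently, the player making the last of the $\nu(\bm{w})$ moves is the first player exactly when $\nu(\bm{w})$ is odd.

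For misère play I will run the same induction with the base case reversed. Now $\bm{0}$ is an $\NN$-position in the misère sense, because the player unable to move wins. This base case fits the claim, since $\nu(\bm{0}) = 0$ is even. The inductive step is verbatim the same, with all parities shifted by one, giving $\PP$ for odd $\nu(\bm{w})$ and $\NN$ for even $\nu(\bm{w})$. As a check, one could alternatively derive the misère statement from Remark \ref{rem:seq} by appending a pile with one stone, but the direct induction is simpler.

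I do not expect a real obstacle here. The only point requiring care is confirming that the options of a position in $\wset^{(1)}$ stay in $\wset^{(1)}$, which holds because $\sset{\bm{w}} \subseteq \wset^{(1)}$. Once that is in place, the target-pile structure, and hence $\bm{\sigma}$, plays no role, because every move simply decrements $\nu$ by one.
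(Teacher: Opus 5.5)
Your proof is correct and takes essentially the paper's approach. The paper gives no formal proof beyond remarking that every move removes a whole one-stone pile, so the outcome depends only on the parity of $\nu(\bm{w})$. Your induction on $\nu(\bm{w})$, with base case $\bm{0} \in \PP$ in normal play and $\bm{0}$ an $\NN$-position in mis\`{e}re play, simply makes that observation rigorous.
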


Below, we consider the case $m \geq 2$.

\begin{figure}[H]
\centering
\includegraphics[keepaspectratio,scale=0.8]{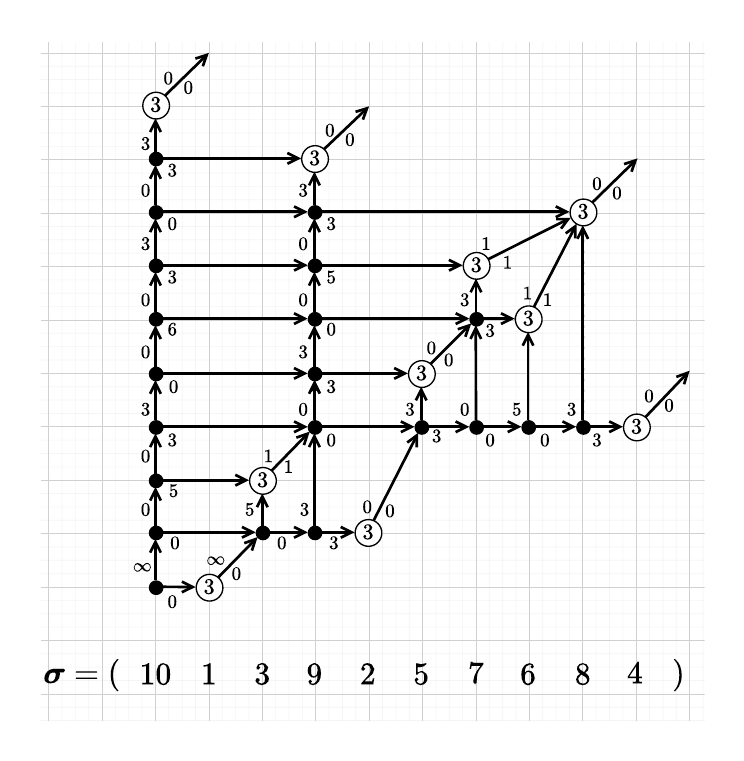}
\caption{The values of $L(\cdot)$ and $R(\cdot)$ for the uniform case where $\bm{\sigma} = (10, 1, 3, 9, 2, 5, 7, 6, 8, 4)$ and $m = 3$}
\label{fig:uniform-thre}
\end{figure}

Figure \ref{fig:uniform-thre} illustrates the uniform case where $\bm{\sigma} = (10, 1, 3, 9, 2, 5, 7, 6, 8, 4)$, as in Fig.~\ref{fig:general-thre}, and $m = 3$.

The following is the main theorem to determine the outcome of a given position in $O(n)$ time.

\begin{theorem}
\label{thm:const-outcome}
For any integer $m \geq 2$ and $\bm{w} \in \wset^{(m)}_+$, we have
\begin{align}
o(\bm{w}) =
\begin{cases}
\phi_{\PP\to\NN}(o(\bm{w}_{\ast})) &\,\,\text{if}\,\, \bm{w} \in \wset_=^{(m)},\\
\phi_{\NN\to\PP}(o(\bm{w}_{LR})) &\,\,\text{if}\,\, \bm{w} \in \wset_{\neq}^{(m)}, \bm{w}_L \in \wset_=^{(m)}, \bm{w}_R \in \wset_=^{(m)},\\
\phi_{\NN\to\LL}(o(\bm{w}_{LR})) &\,\,\text{if}\,\, \bm{w} \in \wset_{\neq}^{(m)}, \bm{w}_L \in \wset_{\neq}^{(m)}, \bm{w}_R \in \wset_=^{(m)},\\
\phi_{\NN\to\RR}(o(\bm{w}_{LR})) &\,\,\text{if}\,\, \bm{w} \in \wset_{\neq}^{(m)}, \bm{w}_L \in \wset_=^{(m)}, \bm{w}_R \in \wset_{\neq}^{(m)},\\
o(\bm{w}_{LR}) &\,\,\text{if}\,\, \bm{w} \in \wset_{\neq}^{(m)}, \bm{w}_L \in \wset_{\neq}^{(m)}, \bm{w}_R \in \wset_{\neq}^{(m)},
\end{cases}
\end{align}
where $\phi_{\PP \to \NN} \colon \{\LL, \RR, \NN, \PP\} \to \{\LL, \RR, \NN, \PP\}$ is
the mapping which maps $\PP$ to $\NN$ and leaves the others unchanged, that is,
$\LL \mapsto \LL, \RR \mapsto \RR, \NN \mapsto \NN, \PP \mapsto \NN$.
The other mappings $\phi_{\NN\to\PP}, \phi_{\NN\to\LL}, \phi_{\NN\to\RR}$ are defined in the same way.
\end{theorem}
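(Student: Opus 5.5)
The plan is to work throughout with Theorem \ref{thm:outcome} and Lemma \ref{lem:tri-point}, using two facts. First, every $\bm{w} \in \wset^{(m)}_+$ has $\alpha(\bm{w}) = \beta(\bm{w}) = m$. Second, for $\bm{w} \in \wset_{\neq}$ one has $(\bm{w}_R)_L = \bm{w}_{LR} = (\bm{w}_L)_R$ by Lemma \ref{lem:neq-lr} (i); if moreover $\bm{w}_R \in \wset_=$ (resp.\ $\bm{w}_L \in \wset_=$), then $(\bm{w}_R)_\ast = \bm{w}_{LR}$ (resp.\ $(\bm{w}_L)_\ast = \bm{w}_{LR}$), because $l(\bm{w}_R) = l(\bm{w})$ (resp.\ $r(\bm{w}_L) = r(\bm{w})$).

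For $\bm{w} \in \wset^{(m)}_=$, Theorem \ref{thm:outcome} (i) gives $(L(\bm{w}),R(\bm{w}))$ as one of $(0,\infty),(\infty,0),(0,0),(1,1)$ according as $o(\bm{w}_\ast)=\LL,\RR,\PP,\NN$. Plugging $\alpha=\beta=m\geq 2$ into Theorem \ref{thm:outcome} (iii) gives $\LL,\RR,\NN,\NN$ respectively, which is $\phi_{\PP\to\NN}$.

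For $\bm{w} \in \wset^{(m)}_{\neq}$, I first reformulate Lemma \ref{lem:tri-point} with $\alpha=\beta=m$. Put $a \coloneqq \min\{L(\bm{w}_R), m\}$ and $b \coloneqq \min\{R(\bm{w}_L), m\}$. Then $o(\bm{w})$ is
\begin{itemize}
\item $\NN$ if $a=b=m$;
\item $\PP$ if $a=b<m$;
\item $\LL$ if $a<b$;
\item $\RR$ if $b<a$.
\end{itemize}
Indeed, with $\gamma=m-\min\{a,b\}$, the case $a<b$ gives $\alpha = L(\bm{w}_R)+\gamma$ and $\beta < R(\bm{w}_L)+\gamma$. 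So it suffices to determine $a$ and $b$ from $o(\bm{w}_{LR})$.

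The key auxiliary claim is the following. For every $\bm{v} \in \wset^{(m)}_{\neq}$ we have $L(\bm{v}), R(\bm{v}) \in \{0\} \cup [m,\infty]$; moreover $R(\bm{v}) = 0$ iff $o(\bm{v}_R) \in \{\RR,\PP\}$, and $L(\bm{v}) = 0$ iff $o(\bm{v}_L) \in \{\LL,\PP\}$. The "iff" parts are immediate from Definition \ref{def:thre}, since $\ppos{\bm{v}}{\cdot}{0} = \bm{v}_R$. The range statement I prove by induction on $\nu(\bm{v})$.

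Suppose $R(\bm{v})>0$. By Theorem \ref{thm:outcome} (ii), $m > L(\bm{v}_R)$ and $R(\bm{v}) = R(\bm{v}_L) + m - L(\bm{v}_R)$, so it suffices to show $R(\bm{v}_L) \geq L(\bm{v}_R)$. If $\bm{v}_R \in \wset_{\neq}$, the induction hypothesis together with $L(\bm{v}_R)<m$ forces $L(\bm{v}_R)=0$, and we are done. If $\bm{v}_R \in \wset_=$, then $L(\bm{v}_R) \in \{0,1\}$ by Theorem \ref{thm:outcome} (i), and the value $1$ occurs only when $o(\bm{v}_{LR}) = \NN$. In that case $\ppos{\bm{v}_L}{\cdot}{0} = \bm{v}_{LR} \notin \RR\cup\PP$, so $R(\bm{v}_L) \geq 1$. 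The bound for $L(\bm{v})$ follows symmetrically. I expect this step to be the main obstacle: seeing that the right invariant is "thresholds are $0$ or at least $m$" on $\wset_{\neq}$, and checking the delicate subcase $L(\bm{v}_R)=1$.

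Now I read off $a$ and $b$ in terms of $o(\bm{w}_{LR})$ and compare them. The columns below list $o(\bm{w}_{LR})=\LL,\RR,\PP,\NN$ in that order.
\begin{itemize}
\item If $\bm{w}_R \in \wset_=$, Theorem \ref{thm:outcome} (i) gives $a = 0, m, 0, 1$.
\item If $\bm{w}_R \in \wset_{\neq}$, the claim gives $a = 0, m, 0, m$.
\item If $\bm{w}_L \in \wset_=$, symmetrically $b = m, 0, 0, 1$.
\item If $\bm{w}_L \in \wset_{\neq}$, symmetrically $b = m, 0, 0, m$.
\end{itemize}
Comparing $a$ and $b$ in each of the four type combinations yields the following maps.
\begin{itemize}
\item Both $\bm{w}_L,\bm{w}_R \in \wset_=$: the $\NN$ column gives $a=b=1<m$, hence $\PP$; this is $\phi_{\NN\to\PP}$.
\item $\bm{w}_L \in \wset_{\neq}$, $\bm{w}_R \in \wset_=$: the $\NN$ column gives $a=1<b=m$, hence $\LL$; this is $\phi_{\NN\to\LL}$.
\item $\bm{w}_L \in \wset_=$, $\bm{w}_R \in \wset_{\neq}$: symmetrically this is $\phi_{\NN\to\RR}$.
\item Both $\bm{w}_L,\bm{w}_R \in \wset_{\neq}$: the outcome is unchanged, i.e.\ the identity.
\end{itemize}
In every case the $\LL,\RR,\PP$ columns give $\LL,\RR,\PP$ respectively, which completes the theorem.
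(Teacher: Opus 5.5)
Your proof is correct. Its core is the same invariant as the paper's Lemma~\ref{lem:const-const}. On $\wset^{(m)}_{\neq}$ the thresholds lie in $\{0\}\cup[m,\infty]$. On $\wset^{(m)}_=$ they lie in $\{0,1,\infty\}$, with $1$ occurring exactly when $\bm{w}_\ast\in\NN$. Both arguments prove this by induction on $\nu$ from Theorem~\ref{thm:outcome} (i)--(ii).

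Two things are organized differently.

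First, the paper isolates the pair-exclusion statement $(L(\bm{w}_R),R(\bm{w}_L))\notin\{(0,1),(1,0)\}$. It proves that statement with a case split on whether $\bm{w}_L\in\wset_=$ or $\bm{w}_L\in\wset_{\neq}$, re-invoking Theorem~\ref{thm:outcome} (ii) in the latter case. You obtain the needed inequality $R(\bm{v}_L)\geq L(\bm{v}_R)$ directly: $L(\bm{v}_R)=1$ forces $\bm{v}_{LR}\in\NN$, and hence $R(\bm{v}_L)\neq 0$ follows straight from Definition~\ref{def:thre}.

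Second, the paper converts thresholds into Lemma~\ref{lem:const-thre}, which expresses $o(\bm{w})$ through $o(\bm{w}_L)$ and $o(\bm{w}_R)$. It must then recover those outcomes from $o(\bm{w}_{LR})$, using the already-proved $\wset_=$ case and Lemma~\ref{lem:const-thre} again on $\bm{w}_L,\bm{w}_R$, which is what the tables do. You instead specialize Lemma~\ref{lem:tri-point} to $\alpha=\beta=m$ as a comparison of $a=\min\{L(\bm{w}_R),m\}$ and $b=\min\{R(\bm{w}_L),m\}$, and read both values directly off $o(\bm{w}_{LR})$. The four $\wset_{\neq}$ cases then come out of a single comparison.

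Your route is shorter and more self-contained. The paper's route yields, as a by-product, the statement of Lemma~\ref{lem:const-thre} (ii): in $\wset^{(m)}_{\neq}$, the player moving first wins if and only if emptying their own target pile is a winning move.
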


By Theorem \ref{thm:const-outcome}, the outcome does not depend on $m$ as long as $m \geq 2$.
Namely, we have the following corollary.

\begin{corollary}
Let an integer $m \geq 2$ and $\bm{w}^{(m)} \in \wset^{(m)}_+$ be arbitrary.
Define $\bm{w}^{(2)} \in \wset^{(2)}$ as the position obtained by replacing every non-zero element $m$ in $\bm{w}^{(m)}$ with $2$.
Then we have $o(\bm{w}^{(m)}) = o(\bm{w}^{(2)})$.
\end{corollary}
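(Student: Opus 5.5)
The plan is to derive the corollary from Theorem \ref{thm:const-outcome} by strong induction on $\nu(\bm{w}^{(m)})$. Let $\pi$ be the map on $\wset^{(m)}$ that replaces every entry $m$ by $2$, so that $\bm{w}^{(2)} = \pi(\bm{w}^{(m)})$. The key observation is that everything on the right-hand side of Theorem \ref{thm:const-outcome} depends on a position only through its set of non-empty piles, apart from the recursive outcome. This holds for the indices $l(\cdot)$ and $r(\cdot)$, for the classes $\wset_=$ and $\wset_{\neq}$, and for the operations $\bm{w} \mapsto \bm{w}_L$, $\bm{w}_R$, $\bm{w}_{\ast}$, $\bm{w}_{LR}$, which only empty a target pile. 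The map $\pi$ preserves supports, so $\pi$ preserves all of these data.

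First, I would record the following facts for $\bm{w} \in \wset^{(m)}_+$:
\begin{itemize}
\item $l(\pi(\bm{w})) = l(\bm{w})$ and $r(\pi(\bm{w})) = r(\bm{w})$;
\item $\bm{w} \in \wset^{(m)}_=$ if and only if $\pi(\bm{w}) \in \wset^{(2)}_=$;
\item $\pi(\bm{w}_L) = \pi(\bm{w})_L$, $\pi(\bm{w}_R) = \pi(\bm{w})_R$, $\pi(\bm{w}_{\ast}) = \pi(\bm{w})_{\ast}$, and $\pi(\bm{w}_{LR}) = \pi(\bm{w})_{LR}$, each whenever the left-hand side is defined.
\end{itemize}
Consequently, $\bm{w}$ and $\pi(\bm{w})$ fall into the same one of the five cases of Theorem \ref{thm:const-outcome}, since the case is decided by membership of $\bm{w}$, $\bm{w}_L$, $\bm{w}_R$ in $\wset_=$ or $\wset_{\neq}$. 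Here $\bm{w}_L$ and $\bm{w}_R$ are non-terminal whenever $\bm{w} \in \wset_{\neq}$.

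Second, the induction. I would extend the claim to include $\bm{w} = \bm{0}$, for which $o(\bm{0}) = \PP = o(\pi(\bm{0}))$ trivially. For $\bm{w} \in \wset^{(m)}_+$, Theorem \ref{thm:const-outcome} gives $o(\bm{w}) = \phi(o(\bm{w}'))$ and $o(\pi(\bm{w})) = \phi(o(\pi(\bm{w})'))$. Here $\phi$ is the same mapping in both formulas, and $\bm{w}'$ is $\bm{w}_{\ast}$ or $\bm{w}_{LR}$ according to the case. Because $\pi(\bm{w})' = \pi(\bm{w}')$ and $\nu(\bm{w}') < \nu(\bm{w})$, the induction hypothesis gives $o(\bm{w}') = o(\pi(\bm{w}'))$, and the claim follows. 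The base case $\nu = 1$ is subsumed: there $\bm{w}_{\ast} = \bm{0}$ and both outcomes equal $\phi_{\PP\to\NN}(\PP) = \NN$.

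The only point needing care is applying Theorem \ref{thm:const-outcome} with values $m$ and $2$ simultaneously. This is legitimate because the theorem is stated for every integer $m \geq 2$, including $m = 2$. There is no real obstacle; the argument is bookkeeping once the support-invariance of the case distinction is noted.
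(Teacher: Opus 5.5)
Your proposal is correct and is essentially the paper's argument. The paper deduces the corollary directly from Theorem~\ref{thm:const-outcome}, since the recursion there depends only on which piles are non-empty. Your induction on $\nu$, with the support-preserving map $\pi$ and the base case $\bm{0}$, just writes out that one-line deduction explicitly.
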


Moreover, the mis\`{e}re outcome of a non-terminal position is equal to the normal outcome as follows.

\begin{corollary}
\label{cor:uniform-misere}
For any integer $m \geq 2$, the position $\bm{w} \in \wset^{(m)}_+$ satisfies $o^+(\bm{w}) = o^-(\bm{w})$.
\end{corollary}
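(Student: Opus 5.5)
We prove Corollary \ref{cor:uniform-misere} via Remark \ref{rem:seq}, which turns the misère outcome into a normal outcome with one extra pile. Fix $m \geq 2$ and $\bm{w} \in \wset^{(m)}_+$, and extend to $n+1$ piles with Right's permutation $\bm{\sigma}' = (\sigma(1), \ldots, \sigma(n), n+1)$. Put $\bm{w}^- \coloneqq (w_1, \ldots, w_n, m) \in \wset^{(m)}$; taking the arbitrary $m$ of Remark \ref{rem:seq} equal to our $m$ keeps the position uniform. The new pile is last for both players, so by the remark $\bm{w}^- = \bm{w} \seq \bm{u}$, where $\bm{u} = (0, \ldots, 0, m)$. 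Since $o^+(\bm{u}) = \NN$, equation \eqref{eq:t1ujq4inra3j} gives $o^-(\bm{w}) = o^+(\bm{w}^-)$. It therefore suffices to show $o^+(\bm{w}^-) = o^+(\bm{w})$, which we do by strong induction on $\nu(\bm{w})$ using Theorem \ref{thm:const-outcome} in the game with $n+1$ piles.

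The key observation is that the operations $\bm{v} \mapsto \bm{v}_L, \bm{v}_R, \bm{v}_\ast$ commute with appending the last pile, as long as the original position is non-terminal. Indeed, for $\bm{w} \in \wset_+$ the $(n+1)$-st pile is never the target pile of either player in $\bm{w}^-$. Hence $l(\bm{w}^-) = l(\bm{w})$ and $r(\bm{w}^-) = r(\bm{w})$, so $\bm{w}^- \in \wset_=$ iff $\bm{w} \in \wset_=$. Moreover $(\bm{w}^-)_L = (\bm{w}_L)^-$, $(\bm{w}^-)_R = (\bm{w}_R)^-$, and so on. The only difference arises when a derived position of $\bm{w}$ is $\bm{0}$: then the corresponding derived position of $\bm{w}^-$ is $\bm{u}$, with $\bm{u} \in \wset_=$ and $o(\bm{u}) = \NN$, whereas $o(\bm{0}) = \PP$.

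\textbf{Base case and case $\bm{w} \in \wset_=$.} If $\nu(\bm{w}) = 1$, then $\bm{w} \in \wset_=$ and $\bm{w}_\ast = \bm{0}$. Theorem \ref{thm:const-outcome} gives $o(\bm{w}) = \phi_{\PP\to\NN}(\PP) = \NN$ and $o(\bm{w}^-) = \phi_{\PP\to\NN}(o(\bm{u})) = \phi_{\PP\to\NN}(\NN) = \NN$. If $\bm{w} \in \wset_=$ with $\nu(\bm{w}) \geq 2$, then $\bm{w}_\ast \neq \bm{0}$, and induction gives $o(\bm{w}_\ast) = o((\bm{w}_\ast)^-)$. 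Applying the same map $\phi_{\PP\to\NN}$ to both sides yields $o(\bm{w}) = o(\bm{w}^-)$.

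\textbf{Case $\bm{w} \in \wset_{\neq}$.} Here $\nu(\bm{w}) \geq 2$. The classification of $\bm{w}_L, \bm{w}_R$ into $\wset_=$ or $\wset_{\neq}$ is the same for $\bm{w}$ and $\bm{w}^-$, because both $\bm{w}_L$ and $\bm{w}_R$ are nonzero: each still contains the other player's target pile. Thus the same clause of Theorem \ref{thm:const-outcome} applies to $\bm{w}$ and to $\bm{w}^-$. It remains to compare $o(\bm{w}_{LR})$ with $o((\bm{w}_{LR})^-)$.

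\begin{itemize}
\item If $\bm{w}_{LR} \neq \bm{0}$, induction gives $o(\bm{w}_{LR}) = o((\bm{w}_{LR})^-)$, and we are done.
\item If $\bm{w}_{LR} = \bm{0}$, then $\nu(\bm{w}) = 2$ and $\bm{w}_L, \bm{w}_R \in \wset_=$. The applicable clause is the one with $\phi_{\NN\to\PP}$. This gives $o(\bm{w}) = \phi_{\NN\to\PP}(\PP) = \PP$ and $o(\bm{w}^-) = \phi_{\NN\to\PP}(o(\bm{u})) = \phi_{\NN\to\PP}(\NN) = \PP$.
\end{itemize}

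This completes the induction and the proof.

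The main point to handle carefully is exactly this boundary behaviour: the only discrepancy between $\bm{0}$ and $\bm{u}$ (outcome $\PP$ versus $\NN$) must be absorbed by the maps $\phi$. It is absorbed in the two places where it can occur: $\phi_{\PP\to\NN}$ in the $\wset_=$ clause and $\phi_{\NN\to\PP}$ in the two-pile $\wset_{\neq}$ clause. Everything else is the routine commutation of $L$, $R$, $\ast$ with appending the pile.
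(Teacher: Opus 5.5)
Your proof is correct and follows essentially the same route as the paper. Both append an $(n+1)$-st pile of $m$ stones so that the sequential-compound formula turns $o^-$ into $o^+$, then induct on $\nu(\bm{w})$ using Theorem \ref{thm:const-outcome}. In both, the only nontrivial cases are $\bm{w}_\ast = \bm{0}$ and $\bm{w}_{LR} = \bm{0}$, where $\phi_{\PP\to\NN}$ and $\phi_{\NN\to\PP}$ absorb the $\PP$-versus-$\NN$ discrepancy between $\bm{0}$ and the single appended pile. Your explicit check that the $\wset_=$/$\wset_{\neq}$ classification of $\bm{w}_L$ and $\bm{w}_R$ is preserved is a detail the paper covers only with ``shown similarly''.
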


\begin{proof}[Proof of Corollary \ref{cor:uniform-misere}]
Let $\bm{\sigma} = (\sigma(1), \sigma(2), \ldots, \sigma(n), n+1)$ and $\bm{e} = (0, 0, \ldots, 0, m)$ of length $n+1$.
Then for any $\bm{w}' = (w'_1, w'_2, \ldots, w'_n, 0) \in \wset^{(m)}$, we have
\begin{align}
o^-(\bm{w}') = o^+(\bm{w}' \seq \bm{e}) = o^+(w'_1, w'_2, \ldots, w'_n, m) \label{eq:2a4a4ldnvjjb}
\end{align}
as stated in Remark \ref{rem:seq}.

We prove the assertion by induction on $\nu(\bm{w})$.
If $\bm{w} \in \wset_=^{(m)}$ and $\bm{w}_{\ast} \in \wset_+^{(m)}$, then
\begin{align}
o^-(\bm{w})
&\eqlab{A}= o^+(\bm{w} \seq \bm{e})\\
&\eqlab{B}= \phi_{\PP\to\NN}(o^+( (\bm{w} \seq \bm{e})_{\ast}) )\\
&= \phi_{\PP\to\NN}(o^+( \bm{w}_{\ast} \seq \bm{e}) )\\
&\eqlab{C}= \phi_{\PP\to\NN}(o^-( \bm{w}_{\ast}) )\\
&\eqlab{D}= \phi_{\PP\to\NN}(o^+( \bm{w}_{\ast}) )\\
&\eqlab{E}= o^+(\bm{w}),
\end{align}
where 
(A) follows from \eqref{eq:2a4a4ldnvjjb},
(B) follows from the first case of Theorem \ref{thm:const-outcome} since $\bm{w} \seq \bm{e} \in \wset_=^{(m)}$ holds by $\bm{w} \in \wset_=^{(m)}$,
(C) follows from \eqref{eq:2a4a4ldnvjjb},
(D) follows from the induction hypothesis,
and (E) follows from the first case of Theorem \ref{thm:const-outcome} since $\bm{w} \in \wset_=^{(m)}$.

The case $\bm{w} \in \wset_{\neq}^{(m)}, \bm{w}_{LR} \in \wset_+^{(m)}$ is shown similarly.
The remaining cases to consider are as follows.
\begin{itemize}
\item The case $\bm{w} \in \wset_=^{(m)}, \bm{w}_{\ast} = \bm{0}$: We see $o^-(\bm{w}) = o^+(\bm{w})$ as follows:
\begin{align}
o^-(\bm{w})
&\eqlab{A}= o^+(\bm{w} \seq \bm{e})\\
&\eqlab{B}= \phi_{\PP \to \NN}(o^+( (\bm{w} \seq \bm{e})_{\ast}) )\\
&= \phi_{\PP \to \NN}(o^+(\bm{w}_{\ast} \seq \bm{e}) )\\
&= \phi_{\PP \to \NN}(o^+(\bm{0} \seq \bm{e}) )\\
&= \phi_{\PP \to \NN}(o^+(\bm{e}) )\\
&= \phi_{\PP \to \NN}(\NN)\\
&= \NN,\\
o^+(\bm{w})
&\eqlab{C}= \phi_{\PP \to \NN}(o^+(\bm{w}_{\ast}))
= \phi_{\PP \to \NN}(o^+(\bm{0}))
= \phi_{\PP \to \NN}(\PP)
= \NN,
\end{align}
where
(A) follows from \eqref{eq:2a4a4ldnvjjb},
(B) follows from the first case of Theorem \ref{thm:const-outcome} since $\bm{w} \seq \bm{e} \in \wset_=^{(m)}$ holds by $\bm{w} \in \wset_=^{(m)}$,
and (C) follows from the first case of Theorem \ref{thm:const-outcome} since $\bm{w} \in \wset_=^{(m)}$.

\item The case $\bm{w} \in \wset_{\neq}^{(m)}, \bm{w}_{LR} = \bm{0}$:
Then $\nu(\bm{w}) = 2$, so that $\bm{w}_L, \bm{w}_R \in \wset_=^{(m)}$;
therefore, $\bm{w}_L \seq \bm{e}, \bm{w}_R \seq \bm{e} \in \wset_=^{(m)}$.
We see $o^-(\bm{w}) = o^+(\bm{w})$ as follows:
\begin{align}
o^-(\bm{w})
&\eqlab{A}= o^+(\bm{w} \seq \bm{e})\\
&\eqlab{B}= \phi_{\NN \to \PP}(o^+((\bm{w} \seq \bm{e})_{LR}) ) \\
&= \phi_{\NN \to \PP}(o^+(\bm{w}_{LR} \seq \bm{e}) ) \\
&= \phi_{\NN \to \PP}(o^+(\bm{0} \seq \bm{e}) ) \\
&= \phi_{\NN \to \PP}(o^+(\bm{e}) )\\
&= \phi_{\NN \to \PP}(\NN)\\
&= \PP,\\
o^+(\bm{w})
&\eqlab{C}= \phi_{\NN \to \PP}(o^+(\bm{w}_{LR}))
= \phi_{\NN \to \PP}(o^+(\bm{0}))
= \phi_{\NN \to \PP}(\PP)
= \PP,
\end{align}
where
(A) follows from \eqref{eq:2a4a4ldnvjjb},
(B) follows from the second case of Theorem \ref{thm:const-outcome} since $\bm{w}_L \seq \bm{e}, \bm{w}_R \seq \bm{e} \in \wset_=^{(m)}$,
and (C) follows from the second case of Theorem \ref{thm:const-outcome} since $\bm{w}_L, \bm{w}_R \in \wset_=^{(m)}$.
\end{itemize}
\end{proof}

\begin{figure}[H]
\centering
\includegraphics[keepaspectratio,scale=0.8]{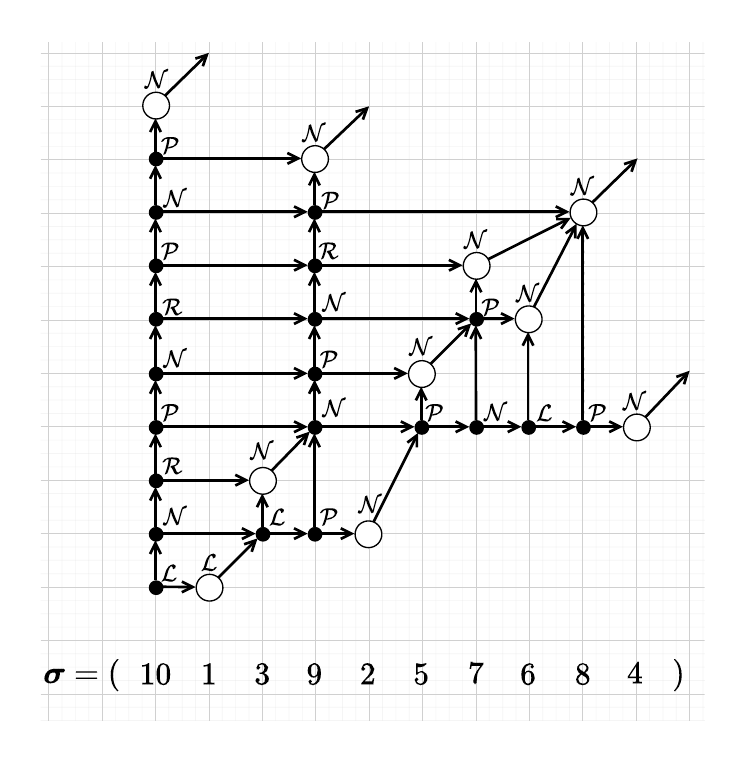}
\caption{The outcomes for the uniform case where $\bm{\sigma} = (10, 1, 3, 9, 2, 5, 7, 6, 8, 4)$ and $m \geq 2$}
\label{fig:uniform-outcome}
\end{figure}

\begin{example}
Let $m \geq 2, \bm{\sigma} = (10, 1, 3, 9, 2, 5, 7, 6, 8, 4)$, and $\bm{w} = (m, m, m, m, m, m, m, m, \allowbreak m, m) \in \wset^{(m)}$.
Then the outcomes $o^+(\spos{\bm{w}}xy) = o^-(\spos{\bm{w}}xy)$ for $\spos{\bm{w}}xy \in \sset{\bm{w}}$ are shown in Fig.~\ref{fig:uniform-outcome}.
Note that the outcomes does not depend on $m$ as long as $m \geq 2$, and thus the number $m$ of stones in the piles is omitted in the figure.
For example, $o(\bm{w}) = o(\spos{\bm{w}}11)$ obtained as
\begin{align}
o(\bm{w})
&= o(\spos{\bm{w}}11)\\
&\eqlab{A}= \phi_{\NN\to\RR} (o(\spos{\bm{w}}32))\\
&\eqlab{B}= \phi_{\NN\to\RR} \circ \phi_{\NN\to\LL}(o(\spos{\bm{w}}44)) \\
&\eqlab{C}= \phi_{\NN\to\RR} \circ \phi_{\NN\to\LL}( o(\spos{\bm{w}}65) ) \\
&\eqlab{D}= \phi_{\NN\to\RR} \circ \phi_{\NN\to\LL} \circ \phi_{\PP\to\NN}( o(\spos{\bm{w}}76) ) \\
&\eqlab{E}= \phi_{\NN\to\RR} \circ \phi_{\NN\to\LL} \circ \phi_{\PP\to\NN} \circ \phi_{\NN\to\PP} ( o(\spos{\bm{w}}98) ) \\
&\eqlab{F}= \phi_{\NN\to\RR} \circ \phi_{\NN\to\LL} \circ \phi_{\PP\to\NN} \circ \phi_{\NN\to\PP} \circ \phi_{\PP\to\NN} ( o(\bm{0}) ) \\
&= \phi_{\NN\to\RR} \circ \phi_{\NN\to\LL} \circ \phi_{\PP\to\NN} \circ \phi_{\NN\to\PP} \circ \phi_{\PP\to\NN} (\PP) \\
&= \phi_{\NN\to\RR} \circ \phi_{\NN\to\LL} \circ \phi_{\PP\to\NN} \circ \phi_{\NN\to\PP} (\NN) \\
&= \phi_{\NN\to\RR} \circ \phi_{\NN\to\LL} \circ \phi_{\PP\to\NN} (\PP) \\
&= \phi_{\NN\to\RR} \circ \phi_{\NN\to\LL} (\NN) \\
&= \phi_{\NN\to\RR} (\LL) \\
&= \LL,
\end{align}
where
(A) follows from the fourth case of Theorem \ref{thm:const-outcome},
(B) follows from the third case of Theorem \ref{thm:const-outcome},
(C) follows from the fifth case of Theorem \ref{thm:const-outcome},
(D) follows from the first case of Theorem \ref{thm:const-outcome},
(E) follows from the second case of Theorem \ref{thm:const-outcome},
and (F) follows from the first case of Theorem \ref{thm:const-outcome}.
\end{example}

\subsection{Proof of Theorem \ref{thm:const-outcome}}

We first have the following observation regarding $L(\cdot)$ and $R(\cdot)$ in the uniform case.

\begin{lemma}
\label{lem:const-const}
For any integer $m \geq 2$ and $\bm{w} \in \wset^{(m)}_+$, the following statements (i)--(iii) hold.
\begin{enumerate}[(i)]
\item $(L(\bm{w}_R), R(\bm{w}_L)) \not\in \{(0, 1), (1, 0)\}$.
\item $L(\bm{w})\not\in\{2, 3, \ldots, m-1\}$ and $R(\bm{w}) \not\in\{2, 3, \ldots, m-1\}$.
\item If $L(\bm{w}) = 1$ or $R(\bm{w}) = 1$, then $\bm{w} \in \wset_=$ and $\bm{w}_{\ast} \in \NN$.
\end{enumerate}
\end{lemma}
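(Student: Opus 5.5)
We prove (i)--(iii) simultaneously by induction on $\nu(\bm{w})$, reading (i) only when $\bm{w}_L,\bm{w}_R\in\wset_+$, so that $L(\bm{w}_R)$ and $R(\bm{w}_L)$ are defined. For a given $\bm{w}$ we establish (i) first, and then (ii) and (iii). Throughout, the induction hypothesis is applied to $\bm{w}_L$, $\bm{w}_R$ and $\bm{w}_{LR}$, which lie in $\wset^{(m)}$ and have fewer non-empty piles. We also use that every non-empty pile has exactly $m$ stones, so $\alpha(\cdot)=\beta(\cdot)=m$ for all non-terminal positions involved.

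\emph{Case $\bm{w}\in\wset^{(m)}_=$.} By Theorem \ref{thm:outcome} (i), $L(\bm{w}),R(\bm{w})\in\{0,1,\infty\}$, which gives (ii). That formula also shows $L(\bm{w})=1$ or $R(\bm{w})=1$ exactly when $o(\bm{w}_\ast)=\NN$, which gives (iii). For (i) we need no special treatment: the argument for (i) below uses only the induction hypothesis on $\bm{w}_L,\bm{w}_R$, not whether $\bm{w}$ itself lies in $\wset_=$ or $\wset_{\neq}$.

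\emph{Proof of (i).} By symmetry it suffices to exclude $(L(\bm{w}_R),R(\bm{w}_L))=(0,1)$. Suppose $R(\bm{w}_L)=1$. By (iii) for $\bm{w}_L$, we get $\bm{w}_L\in\wset_=$ and $\bm{w}_{L\ast}\in\NN$; in particular $\bm{w}_{L\ast}\neq\bm{0}$. The shared target pile of $\bm{w}_L$ is Right's target of $\bm{w}$ (or of the relevant position), so $\bm{w}_{L\ast}=\bm{w}_{LR}$. There are two subcases.
\begin{itemize}
\item If $\bm{w}_R\in\wset_=$, then $\bm{w}_{R\ast}=\bm{w}_{RL}=\bm{w}_{LR}\in\NN$. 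Theorem \ref{thm:outcome} (i) then gives $L(\bm{w}_R)=1\neq0$.
\item If $\bm{w}_R\in\wset_{\neq}$, Theorem \ref{thm:outcome} (ii) says $L(\bm{w}_R)=0$ iff $m=\beta(\bm{w}_R)\le R(\bm{w}_{RL})=R(\bm{w}_{LR})$. But $\bm{w}_{LR}\in\NN$ and Theorem \ref{thm:outcome} (iii) give $m=\beta(\bm{w}_{LR})>R(\bm{w}_{LR})$. Hence $L(\bm{w}_R)\neq0$.
\end{itemize}
I expect this step to be the main obstacle: one must correctly identify $\bm{w}_{L\ast}$, $\bm{w}_{R\ast}$ and $\bm{w}_{RL}$ with $\bm{w}_{LR}$, and handle the degenerate situations where some of these positions are terminal.

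\emph{Case $\bm{w}\in\wset^{(m)}_{\neq}$, proof of (ii) and (iii).} By symmetry, consider $L(\bm{w})$ and use Theorem \ref{thm:outcome} (ii) with $\beta(\bm{w})=m$. If $m\le R(\bm{w}_L)$, then $L(\bm{w})=0$. Otherwise $R(\bm{w}_L)<m$, and (ii) for $\bm{w}_L$ forces $R(\bm{w}_L)\in\{0,1\}$.
\begin{itemize}
\item If $R(\bm{w}_L)=0$, then $L(\bm{w})=L(\bm{w}_R)+m\ge m$.
\item If $R(\bm{w}_L)=1$, then $L(\bm{w}_R)\ge1$ by (i) just proved, so $L(\bm{w})=L(\bm{w}_R)+m-1\ge m$.
\end{itemize}
If $\bm{w}_R=\bm{0}$, then $\nu(\bm{w})\le1$, which contradicts $\bm{w}\in\wset_{\neq}$, so $L(\bm{w}_R)$ is defined here. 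Hence $L(\bm{w})\in\{0\}\cup[m,\infty]$. This gives (ii), and it shows $L(\bm{w})\neq1$, so (iii) holds vacuously for $\bm{w}\in\wset_{\neq}$.

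The base case $\nu(\bm{w})=1$ lies in $\wset_=$ with $\bm{w}_\ast=\bm{0}$, and is covered by the $\wset_=$ case above.
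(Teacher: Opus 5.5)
Your proof is correct and follows the paper's argument. Like the paper, you use simultaneous induction on $\nu(\bm{w})$, read the $\wset_=$ case off Theorem~\ref{thm:outcome}~(i), and prove (i) by applying (iii) to the smaller position, identifying $\bm{w}_{L\ast}=\bm{w}_{LR}=\bm{w}_{RL}$ and splitting on $\wset_=$ versus $\wset_{\neq}$; part (ii) then follows from Theorem~\ref{thm:outcome}~(ii) combined with (i). The only differences are cosmetic: you prove the mirror-image implication ($R(\bm{w}_L)=1 \Rightarrow L(\bm{w}_R)\neq 0$), and in (ii) you split on the value of $R(\bm{w}_L)$ rather than on whether $L(\bm{w}_R)\geq m$.
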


\begin{proof}[Proof of Lemma \ref{lem:const-const}]
We prove (i)--(iii) simultaneously by induction on $\nu(\bm{w})$.
In the case $\bm{w} \in \wset_=$, the statements (i)--(iii) follow directly from Theorem \ref{thm:outcome} (i).
Thus, we suppose
\begin{align}
\bm{w} \in \wset_{\neq}, \label{eq:r79kuln22hfq}
\end{align}
and prove the statements (i)--(iii) hold as follows.

(Proof of (i))
We assume
\begin{align}
\label{eq:ia6z6u2nziet}
L(\bm{w}_R) = 1
\end{align}
and claim $R(\bm{w}_L) \neq 0$.
We have
\begin{align}
\bm{w}_{LR} \eqlab{A}= \bm{w}_{RL} \eqlab{B}= \bm{w}_{R\ast} \eqlab{B}\in \NN, \label{eq:v4l60n9b9n28}
\end{align}
where
(A) follows from \eqref{eq:r79kuln22hfq} and Lemma \ref{lem:neq-lr} (i),
and (B)s follow because $\bm{w}_R \in \wset_=$ from \eqref{eq:ia6z6u2nziet} and the induction hypothesis for (iii).

By \eqref{eq:r79kuln22hfq}, we have $\bm{w}_L \in \wset_+$, so that one of $\bm{w}_L \in \wset_=$ and $\bm{w}_L \in \wset_{\neq}$ holds.
We consider these two cases separately as follows.
\begin{itemize}
\item The case $\bm{w}_L \in \wset_=$: We have $\bm{w}_{L\ast} = \bm{w}_{LR} \in \NN$ by \eqref{eq:v4l60n9b9n28}.
Hence, by Theorem \ref{thm:outcome} (i), we obtain $R(\bm{w}_L) = 1 \neq 0$ as desired.

\item The case $\bm{w}_L \in \wset_{\neq}$: 
We have
\begin{align}
L(\bm{w}_{LR})
\eqlab{A}< \alpha(\bm{w}_{LR})
\eqlab{B}= \alpha(\ppos{\bm{w}_L}{\cdot}{0})
\eqlab{C}= \alpha(\bm{w}_L),
 \label{eq:9pzzzltdm57n}
\end{align}
where
(A) follows from \eqref{eq:v4l60n9b9n28} and Lemma \ref{lem:thre} (a),
(B) follows from $\bm{w}_L \in \wset_{\neq}$ and Lemma \ref{lem:neq-lr} (ii),
and (C) follows from $\bm{w}_L \in \wset_{\neq}$.
Therefore,
\begin{align}
R(\bm{w}_L)
\eqlab{A}= R(\bm{w}_{LL}) + \alpha(\bm{w}_L) - L(\bm{w}_{LR})
\eqlab{B}> R(\bm{w}_{LL})
\geq 0,
\end{align}
where
(A) follows from \eqref{eq:9pzzzltdm57n} and Theorem \ref{thm:outcome} (ii) since $\bm{w}_L \in \wset_{\neq}$ by the assumption,
and (B) follows from \eqref{eq:9pzzzltdm57n}.
In particular, $R(\bm{w}_L) \neq 0$ as desired.
\end{itemize}
Therefore, we conclude that \eqref{eq:ia6z6u2nziet} implies $R(\bm{w}_L) \neq 0$.
Symmetrically, we see that $R(\bm{w}_L) = 1$ implies $L(\bm{w}_R) \neq 0$. This completes the proof of the statement (i).

(Proof of (ii))
By symmetry, we only show that $L(\bm{w}) = 0$ or $L(\bm{w}) \geq m$. 
If $m \leq R(\bm{w}_L)$, then $L(\bm{w}) = 0$ holds as desired directly from Theorem \ref{thm:outcome} (ii).
Thus, we assume
\begin{align}
\label{eq:glsuwgfg2iv1}
m > R(\bm{w}_L).
\end{align}
Then by \eqref{eq:r79kuln22hfq}, \eqref{eq:glsuwgfg2iv1} and Theorem \ref{thm:outcome} (ii), we have
\begin{align}
\label{eq:coxsgxzxdpa6}
L(\bm{w}) = L(\bm{w}_R) + m-R(\bm{w}_L).
\end{align}
We consider the following two cases.
\begin{itemize}
\item The case $L(\bm{w}_R) \geq m$: We have
\begin{align}
L(\bm{w})
\eqlab{A}= L(\bm{w}_R) + m - R(\bm{w}_L)
\eqlab{B}> L(\bm{w}_R)
\eqlab{C}\geq m
\end{align}
as desired, where
(A) follows from \eqref{eq:coxsgxzxdpa6},
(B) follows from \eqref{eq:glsuwgfg2iv1},
and (C) follows directly from the assumption.
\item The case $L(\bm{w}_R) < m$: Then by \eqref{eq:glsuwgfg2iv1}, 
we have $\{L(\bm{w}_R), R(\bm{w}_L)\} \subseteq \{0, 1, 2, \ldots, m-1\}$.
Moreover, the induction hypothesis for (ii) leads to $\{L(\bm{w}_R), R(\bm{w}_L)\} \subseteq \{0, 1\}$.
Hence, by the statement (i) shown above, it must hold
\begin{align}
L(\bm{w}_R) = R(\bm{w}_L) \in \{0, 1\}, \label{eq:mqwpbrkc0t1h}
\end{align}
so that
\begin{align}
L(\bm{w})
\eqlab{A}= L(\bm{w}_R) + m - R(\bm{w}_L)
\eqlab{B}= m
\end{align}
as desired, where
(A) follows from \eqref{eq:coxsgxzxdpa6},
and (B) follows from \eqref{eq:mqwpbrkc0t1h}.
\end{itemize}

(Proof of (iii))
By the statement (ii) shown above, we have $L(\bm{w}) = 0$, $L(\bm{w}) = 1$, or $L(\bm{w}) \geq m$.
The same applies to $R(\bm{w})$.
Also, by the statement (i) shown above,
neither $(L(\bm{w}), R(\bm{w})) = (0, 1)$ nor $(L(\bm{w}), R(\bm{w})) = (1, 0)$ is possible.
Table \ref{tab:const-thre} indicates the values $L(\bm{w})$ obtained by \eqref{eq:r79kuln22hfq} and Theorem \ref{thm:outcome} (ii) for all possible pairs $(L(\bm{w}_R), R(\bm{w}_L))$.
The table shows that neither $L(\bm{w}) = 1$ nor $R(\bm{w}) = 1$ occurs, and thus the statement (iii) vacuously holds.
\end{proof}

\begin{table}
\centering
\caption{The values of $L(\bm{w})$ for all possible pairs $(L(\bm{w}_R), R(\bm{w}_L))$,
where $\geq m$ denotes the integers greater than or equal to $m$.}
\begin{tabular}{c|c|c|c}
\label{tab:const-thre}
   \diagbox{$L(\bm{w}_R)$}{$R(\bm{w}_L)$} & $0$ & $1$ & $\geq m$ \\
   \hline
   $0$ & $m$ & - & $0$ \\
   \hline
   $1$ & - & $m$ & $0$ \\
   \hline
   $\geq m$ & $\geq m$ & $\geq m$ & $0$
\end{tabular}
\end{table}

The following lemma claims that $o(\bm{w})$ is determined only by
the outcome of the subpositions $\bm{w}_L$ and $\bm{w}_R$ regardless of the value of $m$.

\begin{lemma}
\label{lem:const-thre}
For any integer $m \geq 2$, the following statements (i) and (ii) hold.
\begin{enumerate}[(i)]
\item For any $\bm{w} \in \wset_=^{(m)}$, the following equivalences (a) and (b) hold.
\begin{enumerate}[(a)]
\item $\bm{w} \in \LL \cup \PP \iff \bm{w}_{\ast} \in \LL$.
\item $\bm{w} \in \RR \cup \PP \iff \bm{w}_{\ast} \in \RR$.
\end{enumerate}
\item For any $\bm{w} \in \wset_{\neq}^{(m)}$, the following equivalences (a) and (b) hold.
\begin{enumerate}
\item $\bm{w} \in \LL \cup \PP \iff \bm{w}_R \in \LL \cup \NN$.
\item $\bm{w} \in \RR \cup \PP \iff \bm{w}_L \in \RR \cup \NN$.
\end{enumerate}
\end{enumerate}
\end{lemma}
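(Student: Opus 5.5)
The plan is to translate both sides of each equivalence into statements about the thresholds $L(\cdot)$ and $R(\cdot)$, and then read them off from Theorem \ref{thm:outcome} and Lemma \ref{lem:const-const}. The translation is this: $\LL \cup \PP$ is the complement of $\RR \cup \NN$, so for $\bm{w} \in \wset^{(m)}_+$ with $\beta(\bm{w}) = m$, Lemma \ref{lem:thre} (b) applied with $\gamma = m$ gives
\begin{align}
\bm{w} \in \LL \cup \PP \iff m \leq R(\bm{w}).
\end{align}
By the symmetric argument with Lemma \ref{lem:thre} (a), $\bm{w} \in \RR \cup \PP \iff m \leq L(\bm{w})$. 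By symmetry it suffices to prove the (a) parts of (i) and (ii).

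For (i)(a), let $\bm{w} \in \wset^{(m)}_=$. Theorem \ref{thm:outcome} (i) gives $R(\bm{w}) = \infty$ if $o(\bm{w}_{\ast}) = \LL$, and $R(\bm{w}) \in \{0, 1\}$ otherwise. This holds also when $\bm{w}_{\ast} = \bm{0}$, since then $o(\bm{w}_{\ast}) = \PP$. Because $m \geq 2$, we get $m \leq R(\bm{w})$ if and only if $R(\bm{w}) = \infty$, which holds if and only if $\bm{w}_{\ast} \in \LL$. This proves (i)(a).

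For (ii)(a), let $\bm{w} \in \wset^{(m)}_{\neq}$. Then $\bm{w}_R \in \wset^{(m)}_+$, because the pile $l(\bm{w}) \neq r(\bm{w})$ remains non-empty. Moreover $l(\bm{w}_R) = l(\bm{w})$, so $\alpha(\bm{w}_R) = m$. Lemma \ref{lem:thre} (a) with $\gamma = m$ therefore gives
\begin{align}
\bm{w}_R \in \LL \cup \NN \iff L(\bm{w}_R) < m.
\end{align}
It remains to show that $m \leq R(\bm{w})$ if and only if $L(\bm{w}_R) < m$, using the formula for $R(\bm{w})$ in Theorem \ref{thm:outcome} (ii) with $\alpha(\bm{w}) = m$.

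\begin{itemize}
\item If $L(\bm{w}_R) \geq m$, then $\alpha(\bm{w}) \leq L(\bm{w}_R)$, so $R(\bm{w}) = 0 < m$. Both sides of the desired equivalence are false.
\item If $L(\bm{w}_R) < m$, then $R(\bm{w}) = R(\bm{w}_L) + m - L(\bm{w}_R)$. So $m \leq R(\bm{w})$ reduces to $R(\bm{w}_L) \geq L(\bm{w}_R)$, and we must show this holds. Here $\bm{w}_L \in \wset^{(m)}_+$ as well, so Lemma \ref{lem:const-const} (ii) applied to $\bm{w}_R$ forces $L(\bm{w}_R) \in \{0, 1\}$.
\begin{itemize}
\item If $L(\bm{w}_R) = 0$, then $R(\bm{w}_L) \geq 0 = L(\bm{w}_R)$ trivially.
\item If $L(\bm{w}_R) = 1$, then Lemma \ref{lem:const-const} (i) excludes $R(\bm{w}_L) = 0$. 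Hence $R(\bm{w}_L) \geq 1 = L(\bm{w}_R)$.
\end{itemize}
In both sub-cases $m \leq R(\bm{w})$, so both sides of the equivalence are true.
\end{itemize}
This proves (ii)(a), and (ii)(b) follows by the symmetric argument with the roles of $L$ and $R$ exchanged.

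The main obstacle is the sub-case $L(\bm{w}_R) = 1$. There the equivalence hinges on ruling out the combination $(L(\bm{w}_R), R(\bm{w}_L)) = (1, 0)$, which is exactly what Lemma \ref{lem:const-const} (i) provides. The other step needing care is checking that $\alpha(\bm{w}_R) = \alpha(\bm{w}) = m$, so that the threshold lemma can be applied with $\gamma = m$.
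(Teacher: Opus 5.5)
Your proof is correct and follows the paper's argument. Both translate membership in $\LL\cup\PP$ (resp.\ $\RR\cup\PP$) into $R(\bm{w})\geq m$ (resp.\ $L(\bm{w})\geq m$) via Lemma~\ref{lem:thre}, settle (i) with Theorem~\ref{thm:outcome}~(i) and $m\geq 2$, and settle (ii) with Theorem~\ref{thm:outcome}~(ii) together with Lemma~\ref{lem:const-const}~(i)--(ii). The only difference is presentation: you prove the (a) parts and derive $R(\bm{w})\geq m \iff L(\bm{w}_R)<m$ by an explicit case split, whereas the paper proves the (b) parts and reads the mirror-image equivalence $L(\bm{w})\geq m \iff R(\bm{w}_L)<m$ off Table~\ref{tab:const-thre}.
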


\begin{proof}[Proof of Lemma \ref{lem:const-thre}]
(Proof of (i))
The statement (a) is shown as
\begin{align}
\bm{w} \in \LL \cup \PP
\eqlab{A}\iff R(\bm{w}) \geq m
\eqlab{B}\iff \bm{w}_{\ast} \in \LL,
\end{align}
where
(A) follows from Lemma \ref{lem:thre} (b),
and (B) follows from Theorem \ref{thm:outcome} (i) and $m \geq 2$.
The statement (b) is shown symmetrically.

(Proof of (ii))
The statement (b) is shown as
\begin{align}
\bm{w} \in \RR \cup \PP
\eqlab{A}\iff L(\bm{w}) \geq m
\eqlab{B}\iff R(\bm{w}_L) < m
\eqlab{C}\iff \bm{w}_L \in \RR \cup \NN,
\end{align}
where
(A) follows from Lemma \ref{lem:thre} (a),
(B) follows from the discussion in the proof of Lemma \ref{lem:const-const} (iii) and Table \ref{tab:const-thre},
and (C) follows from Lemma \ref{lem:thre} (b).
The statement (a) is shown symmetrically.
\end{proof}

By the above lemma, the desired theorem is proved as follows.

\begin{proof}[Proof of Theorem \ref{thm:const-outcome}]
The case $\bm{w} \in \wset_=$ is shown in Table \ref{tab:eq}. The first column $o(\bm{w}_{\ast})$ ranges over all the outcome classes.
The second and third columns are obtained from the first column by Lemma \ref{lem:const-thre} (i).
The fourth column $o(\bm{w})$ is determined by the second and third columns.
Comparing the first and sixth columns, we see $o(\bm{w}) = \phi_{\PP \to \NN}(\bm{w}_{\ast})$ as desired.

The case $\bm{w} \in \wset_{\neq}, \bm{w}_L \in \wset_=, \bm{w}_R \in \wset_{\neq}$ is
shown in Table \ref{tab:neq-eqneq}. The first column $o(\bm{w}_{LR})$ ranges over all the outcome classes.
The second column is obtained from the first column by the assertion for the case $\bm{w} \in \wset_=$ shown above.
The third column is obtained from the first column by Lemma \ref{lem:const-thre} (ii) (b).
The fourth (resp.~fifth) column is obtained from the third (resp.~second) column by Lemma \ref{lem:const-thre} (ii) (a) (resp.~Lemma \ref{lem:const-thre} (ii) (b)).
The sixth column is determined by the fourth and fifth columns.
The case $\bm{w} \in \wset_{\neq}, \bm{w}_L \in \wset_{\neq}, \bm{w}_R \in \wset_{=}$ is shown by the symmetric argument.

The other cases are similarly shown in Table \ref{tab:neq-eqeq} and Table \ref{tab:neq-neqneq}.
\end{proof}

\begin{table}[htbp]
\centering
\caption{The case $\bm{w} \in \wset_=$.}
\begin{tabular}{c|c|c|c}
\label{tab:eq}
   $o(\bm{w}_{\ast})$ & $\bm{w} \in \LL \cup \PP$ & $\bm{w} \in \RR \cup \PP$ & $o(\bm{w})$ \\
   \hline
   $\LL$ & true & false & $\LL$\\
   $\RR$ & false & true & $\RR$\\
   $\PP$ & false & false & $\NN$\\
   $\NN$ & false & false & $\NN$\\
\end{tabular}
 
\caption{The case $\bm{w} \in \wset_{\neq}, \bm{w}_L \in \wset_=, \bm{w}_R \in \wset_=$.}
\begin{tabular}{c|c|c|c|c|c}
\label{tab:neq-eqeq}
   $o(\bm{w}_{LR})$ & $o(\bm{w}_L)$ & $o(\bm{w}_R)$ & $\bm{w} \in \LL \cup \PP$ & $\bm{w} \in \RR \cup \PP$ & $o(\bm{w})$ \\
   \hline
   $\LL$ & $\LL$ & $\LL$ & true & false & $\LL$\\
   $\RR$ & $\RR$ & $\RR$ & false & true & $\RR$\\
   $\PP$ & $\NN$ & $\NN$ & true & true & $\PP$\\
   $\NN$ & $\NN$ & $\NN$ & true & true & $\PP$\\
\end{tabular}

\caption{The case $\bm{w} \in \wset_{\neq}, \bm{w}_L \in \wset_=, \bm{w}_R \in \wset_{\neq}$.}
\begin{tabular}{c|c|c|c|c|c}
\label{tab:neq-eqneq}
   $o(\bm{w}_{LR})$ & $o(\bm{w}_L)$ & $o(\bm{w}_R)$ & $\bm{w} \in \LL \cup \PP$ & $\bm{w} \in \RR \cup \PP$ & $o(\bm{w})$ \\
   \hline
   $\LL$ & $\LL$ & $\LL$ or $\NN$ & true & false & $\LL$\\
   $\RR$ & $\RR$ & $\RR$ or $\PP$ & false & true & $\RR$\\
   $\PP$ & $\NN$ & $\LL$ or $\NN$ & true & true & $\PP$\\
   $\NN$ & $\NN$ & $\RR$ or $\PP$ & false & true & $\RR$\\
\end{tabular}

\caption{The case $\bm{w} \in \wset_{\neq}, \bm{w}_L \in \wset_{\neq}, \bm{w}_R \in \wset_{\neq}$.}
\begin{tabular}{c|c|c|c|c|c}
\label{tab:neq-neqneq}
   $o(\bm{w}_{LR})$ & $o(\bm{w}_L)$ & $o(\bm{w}_R)$ & $\bm{w} \in \LL \cup \PP$ & $\bm{w} \in \RR \cup \PP$ & $o(\bm{w})$ \\
   \hline
   $\LL$ & $\LL$ or $\PP$ & $\LL$ or $\NN$ & true & false & $\LL$\\
   $\RR$ & $\RR$ or $\NN$ & $\RR$ or $\PP$ & false & true & $\RR$\\
   $\PP$ & $\RR$ or $\NN$ & $\LL$ or $\NN$ & true & true & $\PP$\\
   $\NN$ & $\LL$ or $\PP$ & $\RR$ or $\PP$ & false & false & $\NN$\\
\end{tabular}
\end{table}

\section{The Integrality of Atomic Weights}
\label{sec:aw}

The exact game values for each position are too complicated to analyze, even when limited to \textsc{Partizan-End-Nim}, as suggested in \cite{AN01}.
On the other hand, since \textsc{Partizan-Serial-Nim} is dicotic, it is useful to consider atomic weights.
In this section, we prove that the atomic weight of a position in \textsc{Partizan-Serial-Nim} is an integer, while the atomic weight of a game is a game in general.
For the definition and details of atomic weights of games, we refer to textbooks \cite{ANW19,BCG18,Con00,Sie13}.

The atomic weight $\aw(G)$ of a game $G$ is defined not for all games $G$.
If $\aw(G)$ is defined, then $G$ is said to be \emph{atomic}.
It is known that the atomic weight $\aw(G)$ of a dicotic game $G$ is defined and calculated as follows.

\begin{proposition}[{\cite[Definition II.7.14, Theorem II.7.15]{Sie13}}]
\label{prop:aw-calc}
Let $G$ be an arbitrary dicotic game.
We define a game $\tilde{v}(G)$ recursively as 
\begin{align}
\tilde{v}(G) = \{\aw(G^L) - 2 \mid \aw(G^R) + 2\},
\end{align}
where $G^L$ (resp.~$G^R$) ranges over all Left (resp.~Right) options of $G$, respectively.
If $\tilde{v}(G)$ is not equal to an integer, then $\aw(G) = \tilde{v}(G)$.
If $\tilde{v}(G)$ is equal to an integer, then
\begin{align}
\aw(G) =
\begin{cases}
\text{the smallest element of}\,\, \mathcal{I} &\,\,\text{if}\,\, G < \cgfarstar,\\
\text{the largest element of}\,\, \mathcal{I} &\,\,\text{if}\,\, G > \cgfarstar,\\
0 &\,\,\text{if}\,\, G \cgfuzzy \cgfarstar, \label{eq:jo89m0gtjc7i}
\end{cases}
\end{align}
where
\begin{align}
\mathcal{I} = \{n \in \mathbb{Z} : \aw(G^L) - 2 \cglfuz n \cglfuz \aw(G^R)+2 \,\,\text{for all}\,\, G^L \,\,\text{and}\,\, G^R\},
\end{align}
and $\cgfarstar$ denotes the \emph{far star}, i.e., a nimber $\st m$ for a sufficiently large $m \in \pint$ such that $\st m$ is not a subposition of $G$.
\end{proposition}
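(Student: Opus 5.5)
Since this proposition is quoted from \cite[Definition II.7.14, Theorem II.7.15]{Sie13}, in the paper I would simply invoke it. If a self-contained argument were wanted, I would follow Siegel's treatment and induct on the birthday of the dicotic game $G$, using the characterizing property of atomic weight. That property is that $\aw(G)$ is the (unique up to the relevant equivalence) game $A$ for which $G - A\cdot\up$ is ``atomically negligible'': it is bounded above and below by $\up$ plus or minus the far star $\cgfarstar$, where $\cgfarstar$ is taken large enough not to occur as a subposition of anything in sight.

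The plan has three steps.

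\emph{First step.} Assume by induction that each option $G^L, G^R$ is atomic, with $\aw(G^L)$ and $\aw(G^R)$ satisfying the characterizing property. Form $\tilde v(G) = \{\aw(G^L)-2 \mid \aw(G^R)+2\}$. The shift by $2$ is what absorbs the error when one compares $G^L$ with $\aw(G^L)\cdot\up$. This is because $\up$-multiples satisfy $(a\cdot\up)$ minus its Left incentive $\approx (a-2)\cdot\up$ up to atomically negligible terms; Norton multiplication by $\up$ has $\up$-incentive $\up+\st$, which contributes one $\up$ and one $\st$, and the latter is swallowed by $\cgfarstar$.

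\emph{Second step.} When $\tilde v(G)$ is not an integer, show that $\tilde v(G)\cdot\up$ differs from $G$ by a negligible game. Write $G - \tilde v(G)\cdot\up$ and exhibit strategies in the difference game (plus $\up$ and $\cgfarstar$) by pairing each option of $G$ with the corresponding option of $\tilde v(G)\cdot\up$. The non-integrality is used via the translation (number-avoidance) property of Norton products. It guarantees that no player prefers moving in the $\tilde v(G)\cdot\up$ component for its own sake, so the mirroring strategy works.

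\emph{Third step.} When $\tilde v(G)$ equals an integer, Norton multiplication loses its translation property, and the integer must be selected from the confused interval $\mathcal{I}$. Here the comparison of $G$ with $\cgfarstar$ decides the direction. If $G>\cgfarstar$, Left can afford the largest compatible integer $n$, and one checks $G - n\cdot\up$ is negligible by showing that any Left advantage beyond $n$ would contradict maximality of $n$ in $\mathcal I$. The case $G<\cgfarstar$ is symmetric. If $G \cgfuzzy \cgfarstar$, the value $0$ works, because $G$ is then confused with the far star and hence already negligible.

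I expect the main obstacle to be the integer case, specifically verifying the claim about the extreme element of $\mathcal{I}$. The argument requires a careful case analysis of the first move in $G - n\cdot\up + \cgfarstar$, and also showing that $\mathcal I$ is nonempty and bounded on the relevant side. I would first try to reduce this to the known ``integer avoidance'' lemma for atomic weights, exactly as in \cite{Sie13}.
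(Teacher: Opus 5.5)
Your primary plan, simply invoking Siegel's Definition II.7.14 and Theorem II.7.15, is exactly what the paper does, since the proposition is quoted from \cite{Sie13} without proof. The optional self-contained sketch is only heuristic and nothing in the paper depends on it, but its $G \cgfuzzy \cgfarstar$ step is not valid as stated: confusion with $\cgfarstar$ alone does not make $G$ negligible (e.g.\ $\{\doubleup\st \mid \Downarrow\st\}$ is confused with $\cgfarstar$ but has atomic weight $\st$), so the hypothesis that $\tilde{v}(G)$ is an integer must genuinely be used there.
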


The following fundamental properties of atomic weights are known.

\begin{proposition}[{\cite[Theorem II.7.12]{Sie13}}]
\label{prop:aw-prop}
For any atomic games $G$ and $H$, the following statements (i) and (ii) hold.
\begin{enumerate}[(i)]
\item $G+H$ is atomic and $\aw(G + H) = \aw(G) + \aw(H)$.
\item If $\aw(G) \geq \aw(H)$, then $o(G + \cgfarstar) \geq o(H + \cgfarstar)$.
\end{enumerate}
\end{proposition}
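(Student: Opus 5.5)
Proposition~\ref{prop:aw-prop} is quoted from \cite{Sie13}, and I would prove it from the \emph{definition} of atomic weight rather than from the recursive calculation in Proposition~\ref{prop:aw-calc}. I would use the characterization: $G$ is atomic with $\aw(G) = g$ exactly when $G$ differs from the Norton product $g \cdot \up$ by an \emph{infinitesimal of order smaller than $\up$}. Writing $\up^{\to}$ for the usual ``tiny relative to $\up$'' bound, this means there is some $k$ with
\begin{align}
-k \cdot \up^{\to} \;\leq\; G - g\cdot\up \;\leq\; k\cdot \up^{\to}.
\end{align}
Equivalently, one may phrase it through comparisons with $\cgfarstar$. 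The plan is to reduce both statements to two facts: how $g \cdot \up$ behaves, and that these ``small'' infinitesimals form a convex subgroup.

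For (i), let $g = \aw(G)$ and $h = \aw(H)$, and write $G = g\cdot\up + \varepsilon$ and $H = h\cdot\up + \delta$ with $\varepsilon, \delta$ small. Then $G + H = (g\cdot\up + h\cdot\up) + (\varepsilon + \delta)$. There are two steps:
\begin{enumerate}[(a)]
\item The Norton product by $\up$ is additive in its first argument, i.e.\ $g\cdot\up + h\cdot\up = (g+h)\cdot\up$. This is a standard consequence of Norton multiplication being a homomorphism when $\up$ is the unit.
\item The small infinitesimals are closed under addition: adding the bounds gives $|\varepsilon + \delta| \le (k+k')\cdot\up^{\to}$.
\end{enumerate}
Together these show that $G+H$ is atomic with weight $g+h$. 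If atomic weights are taken to be games rather than integers, I would treat $g,h$ as games and use linearity of the Norton product in the multiplier.

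For (ii), I would first show the basic principle: for a small infinitesimal $\varepsilon$,
\begin{align}
x\cdot\up + \varepsilon + \cgfarstar \geq 0 \,\,\text{whenever}\,\, x > 0,
\end{align}
and more sharply that the outcome of $x\cdot\up + \varepsilon + \cgfarstar$ depends only on $x$, and monotonically so. Granting this, suppose $\aw(G) \geq \aw(H)$. By (i), $G - H$ is atomic with $\aw(G-H) = \aw(G) - \aw(H) \geq 0$. I would then compare $G + \cgfarstar$ with $H + \cgfarstar$ player by player. Consider a first-player win for Right in $G + \cgfarstar$, and use the two-ahead rule and the fact that $\cgfarstar$ acts as a remote ``tempo'' that neither player can exploit. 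From this I would argue that any Right win in $G+\cgfarstar$ transfers to a Right win in $H+\cgfarstar$, and symmetrically for Left, which gives $o(G+\cgfarstar) \geq o(H+\cgfarstar)$.

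The main obstacle is (ii). Outcome comparison is not implied by comparison of differences in general, because $o(G+X)$ and $o(H+X)$ are not controlled by $G-H$ alone. The key step is the two-ahead rule: a game with atomic weight at least $2$ plus $\cgfarstar$ is a Left win. I would try first to derive (ii) from this rule, applying it to $G - H + \cgfarstar$ shifted by suitable multiples of $\up$. I would then translate to outcomes using the fact that adding $\cgfarstar$ collapses the distinctions among small infinitesimals, so that only the integer part of the atomic weight (by comparison with $0$) influences who wins.
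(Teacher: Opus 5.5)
The paper gives no proof of this proposition; it quotes it from \cite[Theorem II.7.12]{Sie13} and uses it only as a black box in Lemma~\ref{lem:aw-option}. Judged on its own terms, your sketch has genuine gaps in both parts.

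For (i), the characterization you start from is wrong as stated. You take the error $\varepsilon=G-g\cdot\up$ to satisfy $-k\up^{\to}\le\varepsilon\le k\up^{\to}$, with $\up^{\to}$ of smaller order than $\up$, so that $k\up^{\to}\le\up$. But $\aw(\st)=0$ by Proposition~\ref{prop:aw-value}(ii), so your characterization would force $\st\le k\up^{\to}\le\up$, that is, $\up+\st\ge 0$. This is false: in $\up\st=\{0,\st\mid 0\}$, Right moving first moves to $0$ and wins. Any admissible error class must therefore absorb all nimbers. Its bounding game then lies above $\st$, so it cannot be "tiny relative to $\up$" in the order. Your step (a), linearity of Norton multiplication, is fine. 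Step (b) cannot be checked until you specify the correct error class from the actual definition in \cite{Sie13} and verify that it is closed under addition. As written, (i) is not established.

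For (ii), the displayed ``basic principle'' is false. Take the dicotic game $K=\{\doubleup\st\mid\downarrow\st\}$. Then $\tilde{v}(K)=\{\aw(\doubleup\st)-2\mid\aw(\downarrow\st)+2\}=\{0\mid 1\}=\tfrac12$, so $\aw(K)=\tfrac12>0$ by Proposition~\ref{prop:aw-calc}. Yet Right, moving first in $K+\cgfarstar$, moves to $\downarrow\st+\cgfarstar$. That position is $\downarrow$ plus a nimber $\st j$ with $j\ge 2$, hence negative, so Left loses. Thus $K+\cgfarstar\not\ge 0$; it is in $\NN$, just like $\cgfarstar$, whereas $\up+\cgfarstar>0$. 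So positivity of the weight, or ``the integer part compared with $0$'' (undefined for game-valued weights), is not what decides the outcome.

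More fundamentally, your sharper principle is not a lemma on the way to (ii); it is equivalent to (ii). That principle says the outcome of $x\cdot\up+\varepsilon+\cgfarstar$ depends only on $x$, and monotonically. Given it, (ii) follows in one line: $g\ge h$ gives $g\cdot\up+\cgfarstar\ge h\cdot\up+\cgfarstar$ because Norton multiplication by $\up>0$ preserves order, and outcomes are monotone in the game order. Conversely, (ii) applied in both directions to $G$ and $g\cdot\up$ yields it. Its nontrivial half is that the far star makes the outcome insensitive to the error term, and you only say you ``would first show'' it.

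The rest of your plan does not supply this. You note yourself that $G-H$ does not control the outcomes. The two-ahead rule only gives the sign of a single game of weight at least $2$, while (ii) must, for example, separate weight $\tfrac12$ from weight $1$ and handle equal weights. The two-ahead rule is also Theorem~II.7.13 in \cite{Sie13}, stated after the result you are proving, so relying on it needs a check against circularity.

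The missing idea is an actual proof, from the precise definition of atomic weight, that $o(G+\cgfarstar)=o(\aw(G)\cdot\up+\cgfarstar)$.
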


The atomic weights of typical games are given as follows.

\begin{proposition}[{\cite[Section II.7]{Sie13}}]~
\label{prop:aw-value}
\begin{enumerate}[(i)]
\item For any $n \in \mathbb{Z}$, we have $\aw(n \cdot \up) = n$.
\item For any $n \in \uint$, we have $\aw(\st n) = 0$.
\end{enumerate}
\end{proposition}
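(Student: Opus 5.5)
The plan is to verify both parts directly from the recursive characterization in Proposition \ref{prop:aw-calc}. Part (ii) uses induction on $n$. Part (i) is reduced to the single computation $\aw(\up)=1$ (and its mirror $\aw(\downarrow)=-1$) by the additivity of Proposition \ref{prop:aw-prop} (i).

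\emph{Part (ii).} I induct on $n$.
\begin{itemize}
\item For $n=0$, the game $0$ has no options, so $\tilde v(0)=\{\,\mid\,\}=0$ is an integer and $\mathcal I=\aint$. Since $0+\cgfarstar=\cgfarstar$ is a first-player win, $0\cgfuzzy\cgfarstar$, and the third case of \eqref{eq:jo89m0gtjc7i} gives $\aw(0)=0$.
\item For $n\ge 1$, the options of $\st n$ on both sides are $\st k$ with $k<n$, all of atomic weight $0$ by induction. Hence $\tilde v(\st n)=\{-2\mid 2\}=0$, again an integer.
\item To compare with the far star $\st m$ (where $m\neq n$): $\st n+\st m=\st(n\oplus m)$ is a nonzero nimber, hence fuzzy with $0$. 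So $\st n\cgfuzzy\cgfarstar$ and $\aw(\st n)=0$.
\end{itemize}

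\emph{Part (i).} I use $\up=\{0\mid \st\}$ and $\downarrow=\{\st\mid 0\}=-\up$. Their options have atomic weight $0$ by part (ii), so $\tilde v(\up)=\tilde v(\downarrow)=\{-2\mid 2\}=0$. The set $\mathcal I$ is the set of integers $k$ with $-2\cglfuz k\cglfuz 2$; for numbers $\cglfuz$ is just $<$, so $\mathcal I=\{-1,0,1\}$. It remains to show $\up>\cgfarstar$, i.e.\ $\up+\st m>0$ for all large $m\geq 2$. Then the second case of \eqref{eq:jo89m0gtjc7i} gives $\aw(\up)=1$, and the mirror argument ($\downarrow<\cgfarstar$) gives $\aw(\downarrow)=-1$.

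Finally, $n\cdot\up$ for $n>0$ (resp.\ $n<0$) is a sum of $n$ copies of $\up$ (resp.\ $|n|$ copies of $\downarrow$), and $0\cdot\up=0$. Proposition \ref{prop:aw-prop} (i) then yields $\aw(n\cdot\up)=n$ in all cases.

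\emph{Main obstacle.} The one nontrivial step is the inequality $\up+\st m>0$ for $m\ge 2$. I would prove by induction on $m\ge 2$ that Left wins moving second; she also wins moving first by playing $\st m\to 0$, leaving $\up>0$. Consider Right's possible first moves:
\begin{itemize}
\item $\up\to\st$, giving $\st+\st m$: Left replies $\st m\to\st$, reaching $0$, and wins.
\item $\st m\to 0$, giving $\up>0$: Left wins.
\item $\st m\to\st$, giving $\up+\st$: Left replies $\st\to 0$, leaving $\up$, and wins.
\item $\st m\to\st k$ with $2\le k<m$, giving $\up+\st k$: Left wins moving second there by the induction hypothesis.
\end{itemize}
This is routine but should be written out, since it is exactly where the far star's largeness ($m\ge 2$) is used.
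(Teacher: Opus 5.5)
The paper gives no proof of this proposition; it imports it from Siegel together with Propositions \ref{prop:aw-calc} and \ref{prop:aw-prop}. You instead derive it from those two companion results. In both of your computations, $\tilde v=\{-2\mid 2\}=0$ and $\mathcal I=\{-1,0,1\}$. For $\st n$, the case split is decided by $\st n+\st m=\st(n\oplus m)\neq 0$, so $\st n\cgfuzzy\cgfarstar$ and the weight is $0$. For $\up$, it is decided by $\up>\cgfarstar$, which selects the largest element $1$. You then pass to $n\cdot\up$ by the additivity in Proposition \ref{prop:aw-prop} (i). This is correct relative to Propositions \ref{prop:aw-calc} and \ref{prop:aw-prop} taken as black boxes. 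It makes the paper self-contained within its own toolkit, and it shows exactly where the size of the far star matters: $\up+\st$ is a first-player win, whereas $\up+\st m>0$ for $m\ge 2$. The bare citation avoids leaning on the heavier calculus for what are the most basic facts about atomic weight. Your argument is only as independent of these values as the two cited propositions are.

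There is one slip in your final bullet. After Right moves $\st m\to\st k$ with $2\le k<m$, it is Left's turn in $\up+\st k$. So you need Left to win there \emph{moving first}, not moving second as you wrote. That follows at once from the reply you already used for $k=1$: play $\st k\to 0$, leaving $\up$ with Right to move. Hence the induction on $m$ is unnecessary. Every Right move inside the $\st m$ component is answered by emptying what remains of it, or by $\up\to 0$ if Right already emptied it. Right's move $\up\to\st$ is answered by $\st m\to\st$, which is the only point where $m\ge 2$ is used.
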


Our main theorem is as follows.

\begin{theorem}
\label{thm:aw-integer}
For any $\bm{w} \in \wset$, we have $\aw(\bm{w}) \in \mathbb{Z}$.
\end{theorem}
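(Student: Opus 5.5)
We argue by induction on the total number of stones $\sum_i w_i$, using the recursive computation of Proposition~\ref{prop:aw-calc}. The game is dicotic, so this proposition applies. The base case $\bm{w} = \bm{0}$ gives $\aw(\bm{0}) = 0$.

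For $\bm{w} \in \wset_+$, the induction hypothesis says every option has integer atomic weight. The Left options are $\ppos{\bm{w}}{\alpha'}{\cdot}$ for $0 \le \alpha' < \alpha(\bm{w})$, so their weights form a finite set of integers; write $a$ for the largest value among them. Likewise, write $b$ for the smallest value among the integer atomic weights of the Right options. Dominated options can be removed without changing the value, so
\[
\tilde{v}(\bm{w}) = \{a-2 \mid b+2\}.
\]
If $a-2 < b+2$, this is a number born as the simplest number strictly between them. The interval has length greater than $1$ only when $b+2-(a-2) \geq 2$; in that case the simplest number between two integers at distance at least $2$ is an integer. Then the second clause of Proposition~\ref{prop:aw-calc} applies, and $\aw(\bm{w})$ is either $0$ or an extremal element of the set $\mathcal{I}$. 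Every element of $\mathcal{I}$ is an integer, so $\aw(\bm{w}) \in \mathbb{Z}$ provided the extremal element exists.

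The danger lies in the remaining cases:
\begin{enumerate}[(1)]
\item $b+2-(a-2) = 1$, where $\tilde{v}(\bm{w}) = a-2+\tfrac12$ is a non-integer;
\item $a-2 \geq b+2$, where $\tilde{v}(\bm{w})$ is a hot game such as $\{k \mid j\}$ or $\pm$, which may or may not equal an integer;
\item $a-2 = b+2$, where $\tilde{v}(\bm{w})$ is $k+\st$, which is never an integer.
\end{enumerate}
So the real content is a structural lemma on how the atomic weights of the options can sit relative to each other. The plan is to prove, simultaneously with integrality, a strengthened inductive statement controlling the options. I would aim for the following: for every $\bm{w}$, the weights $\aw(\ppos{\bm{w}}{\gamma}{\cdot})$ are nondecreasing in $\gamma$ and change by increments that keep $\{a-2\mid b+2\}$ either with $b-a \geq -2$ or with $\tilde v(\bm{w})$ forced to an integer by dominance/reversibility. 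Symmetrically, the weights $\aw(\ppos{\bm{w}}{\cdot}{\gamma})$ should be nonincreasing in $\gamma$. Monotonicity in $\gamma$ follows as in Lemma~\ref{lem:thre}, since a larger pile gives more options.

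To get such control, I would mirror the case structure of Section~\ref{sec:general}.

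In the case $\bm{w}\in\wset_=$, Remark~\ref{rem:seq} writes $\bm{w} = \bm{w}'\seq\bm{w}_\ast$. Here $\bm{w}'$ is a single Nim heap $\st\gamma$, and $\aw(\bm{w}_\ast)$ is an integer $k$ by induction. I would show directly that $\aw(\pos{\bm{w}}{\gamma}) = k$ for every $\gamma \geq 0$ except possibly a correction at $\gamma=1$. The idea is that $\st\gamma \seq H$ behaves atomically like $H$ plus a star-like infinitesimal. One checks through Proposition~\ref{prop:aw-calc} that $\{k-2\mid k+2\}$-type values, or values with all options equal to $k$, yield weight $k$. The far-star comparison in \eqref{eq:jo89m0gtjc7i} is decided by outcomes, and those are given by Theorem~\ref{thm:outcome}.

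In the case $\bm{w}\in\wset_{\neq}$, I would use the decomposition of Lemma~\ref{lem:tri-point} in the $(\alpha,\beta)$-plane. Its diagonal shape suggests the following. Take $\gamma$ with $\alpha = L(\bm{w}_R)+\gamma$. Then I would like $\aw(\ppos{\bm{w}}{\alpha}{\beta})$ to depend only on the difference $\alpha - L(\bm{w}_R) - (\beta - R(\bm{w}_L))$ and on the boundary weights $\aw(\ppos{\bm{w}}{0}{\beta})$ and $\aw(\ppos{\bm{w}}{\alpha}{0})$. The latter are positions with fewer nonempty piles, hence integers by induction. Within the region, a move by Left or Right changes this difference by a controlled amount. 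This should give the needed gap condition $b-a\ge -2$, or else a cold integer value.

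Computing $\aw$ requires $\bm{w}+\cgfarstar$ outcomes. These are outcomes in the same ruleset: adding a far star amounts to appending a large impartial heap, and we can treat those outcomes by the threshold machinery with Proposition~\ref{prop:aw-prop}(ii).

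The main obstacle will be the $\wset_{\neq}$ case, specifically excluding the non-integer configurations (1)--(3) above. My first attempt would be an explicit formula of the form
\[
\aw(\ppos{\bm{w}}{\alpha}{\beta}) = \operatorname{clip}\bigl((\alpha-L(\bm{w}_R)) - (\beta-R(\bm{w}_L))\bigr) + c,
\]
where $c$ and the clipping bounds come from the integer weights of $\bm{w}_L$, $\bm{w}_R$ and $\bm{w}_{LR}$. The formula would then be verified against Proposition~\ref{prop:aw-calc} by a double induction on $(\alpha,\beta)$.
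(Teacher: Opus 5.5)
\paragraph{There is a genuine gap.}
Your reduction is right. Once every option has an integer atomic weight, $\tilde v(\bm{w})=\{a-2\mid b+2\}$ is an integer exactly when $a\le b+2$. (For $a-2>b+2$ it is a switch, which is never an integer, so your case (2) is simply fatal.) The whole theorem is therefore this one inequality, and the proposal never proves it. The structural lemma is only gestured at, and the concrete forms you suggest are false.

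\begin{itemize}
\item Atomic weights are not monotone in a target pile; Lemma~\ref{lem:thre} concerns outcome classes, not weights. In two-pile \textsc{Partizan-End-Nim} ($\bm{\sigma}=(2,1)$), $(0,2)=\st 2$ has weight $0$. But $(1,2)=\{\st 2\mid 0,\st\}$ satisfies $(1,2)+\cgfarstar<0$, so $\aw((1,2))=-1$.
\item In $\wset_=$, suppose $k=\aw(\bm{w}_{\ast})\ge 2$. Then the two-ahead rule gives $\pos{\bm{w}}{\gamma}>\cgfarstar$ for every $\gamma\ge 1$, so $\aw(\pos{\bm{w}}{\gamma})=k+1$ for all $\gamma\ge 1$, not $k$. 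This occurs for $\bm{\sigma}=(1,4,3,2)$ and $\bm{w}=(\gamma,2,1,1)$. There $\bm{w}_{\ast}$ is the three-pile \textsc{Partizan-End-Nim} position $(2,1,1)=\{0,\st\mid\{0,\st\mid\st 2\}\}$, whose weight is $2$.
\item $\bm{w}+\cgfarstar$ is not a position of the ruleset. The heap can be played at any moment, not only when it becomes a target pile. So Theorem~\ref{thm:outcome} does not decide the comparisons with $\cgfarstar$ that Proposition~\ref{prop:aw-calc} needs.
\end{itemize}
The clipped formula is therefore an unsupported guess, and as it stands the proposal does not prove the theorem.

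\paragraph{The missing idea.}
What you need is a local Lipschitz property, which makes any explicit formula unnecessary; this is the paper's Lemma~\ref{lem:aw-option}. If $G$ is dicotic and $\aw(G)$ and all $\aw(G^L)$, $\aw(G^R)$ are integers, then $\aw(G^L)-1\le\aw(G)\le\aw(G^R)+1$.

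The reason is as follows.
\begin{itemize}
\item $\tilde v(G)$ must be an integer, so $\mathcal{I}=\{n\in\mathbb{Z}:\aw(G^L)-1\le n\le\aw(G^R)+1 \text{ for all } G^L, G^R\}$ is non-empty.
\item If $G$ is comparable with $\cgfarstar$, then $\aw(G)\in\mathcal{I}$ by definition.
\item Suppose $G\cgfuzzy\cgfarstar$ but $0\notin\mathcal{I}$, say $\aw(G^L)\ge 2$ for some $G^L$. Then every $\aw(G^R)\ge 0$. Proposition~\ref{prop:aw-prop}~(ii) and the two-ahead rule then yield $G>\cgfarstar$, a contradiction.
\end{itemize}

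With this lemma, your induction on the number of stones closes immediately.
\begin{itemize}
\item In $\wset_=$, the options $\pos{\bm{w}}{\gamma'}$ with $\gamma'<\gamma$ are options of one another. Their weights therefore lie within $1$ of each other, and $a\le b+1$.
\item In $\wset_{\neq}$, write $a=\aw(\ppos{\bm{w}}{\hat\alpha}{\beta})$ and $b=\aw(\ppos{\bm{w}}{\alpha}{\check\beta})$. The position $\ppos{\bm{w}}{\hat\alpha}{\check\beta}$ is both a Right option of the former and a Left option of the latter. Applying the lemma twice gives $a\le b+2$.
\end{itemize}
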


\begin{remark}
While outcomes in the uniform case with $m \geq 2$ do not depend on $m$,
atomic weights may depend on $m$ 
because of the following counterexample: $\aw((2, 2, 2, 2)) = 0 \neq 1 = \aw((3, 3, 3, 3))$, where $n = 4$ and $\bm{\sigma} = (3, 2, 4, 1)$.
\end{remark}

\subsection{Proof of Theorem \ref{thm:aw-integer}}
The proof relies on the following lemma.

\begin{lemma}
\label{lem:aw-option}
For any non-zero dicotic game $G$, if $\aw(G)$ is an integer and the atomic weights of all options of $G$ are integers, then the following statements (i) and (ii) hold.
\begin{enumerate}[(i)]
\item For any Left option $G^L$ of $G$, we have $\aw(G) \geq \aw(G^L) - 1$.
\item For any Right option $G^R$ of $G$, we have $\aw(G) \leq \aw(G^R) + 1$.
\end{enumerate}
\end{lemma}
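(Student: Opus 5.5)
The plan is to read the inequality off the computation rule of Proposition~\ref{prop:aw-calc}. By symmetry (swap Left and Right, i.e.\ negate $G$, which negates all atomic weights) it suffices to prove (i). Write $a_L := \aw(G^L)$ and $b_R := \aw(G^R)$; all of these are integers by hypothesis, and $\tilde v(G) = \{a_L - 2 \mid b_R + 2\}$.

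\emph{Step 1: $\tilde v(G)$ is an integer.} If $\tilde v(G)$ were not equal to an integer, Proposition~\ref{prop:aw-calc} would give $\aw(G) = \tilde v(G)$, contradicting the assumption that $\aw(G)$ is an integer. So $\tilde v(G)$ equals an integer, and $\aw(G)$ is given by \eqref{eq:jo89m0gtjc7i}.

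\emph{Step 2: the cases $G < \cgfarstar$ and $G > \cgfarstar$.} Here $\aw(G)$ is the smallest or the largest element of $\mathcal{I}$, so in particular $\aw(G) \in \mathcal{I}$. For integers, $a_L - 2 \cglfuz n$ means $a_L - 2 < n$, i.e.\ $n \geq a_L - 1$. Hence $\aw(G) \geq a_L - 1$ for every Left option, which is (i) in these cases.

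\emph{Step 3: the case $G \cgfuzzy \cgfarstar$.} Here $\aw(G) = 0$, so we must show $a_L \leq 1$ for every $G^L$. I argue by contradiction: suppose some $G^{L_0}$ has $a_{L_0} \geq 2$. The goal is to show $G > \cgfarstar$, i.e.\ $G + \cgfarstar \in \LL$, contradicting $G \cgfuzzy \cgfarstar$.
\begin{itemize}
\item First, since $\aw(G^{L_0}) \geq 2 = \aw(\doubleup)$, Proposition~\ref{prop:aw-prop}~(ii) and Proposition~\ref{prop:aw-value} give $o(G^{L_0} + \cgfarstar) \geq o(\doubleup + \cgfarstar) = \LL$. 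The same comparison, with $\cgfarstar$ replaced by $\cgfarstar + \st j$ (still a far star for $G^{L_0}$), should give $G^{L_0} + \st j > 0$ for the relevant $j$.
\item Next, examine $G + \cgfarstar$. Left moving first wins by playing to $G^{L_0} + \cgfarstar \in \LL$.
\item Right moving first either moves $\cgfarstar \to \st j$, and Left answers $G^{L_0} + \st j$, which is positive by the first point; or Right moves to $G^R + \cgfarstar$. In the latter case I plan to use that the integer $k = \tilde v(G)$ satisfies $a_{L_0} - 2 < k < b_R + 2$ (a simplest number strictly between the options), so $b_R \geq a_{L_0} - 2 \geq 0$. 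I then need $G^R + \cgfarstar \in \LL \cup \NN$. This follows from $\aw(G^R) \geq 0$ via Proposition~\ref{prop:aw-prop}~(ii), comparing with $\aw(0) = 0$ and $o(\cgfarstar) = \NN$.
\end{itemize}

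The main obstacle is Step 3: turning atomic-weight bounds into outcomes against $\cgfarstar$ and $\st j$. The delicate points are that the far star for $G$ serves as a far star for its options and sums, and that the bound $b_R \geq a_{L_0} - 2$ is derived correctly from $\tilde v(G)$ being an integer (for instance when $G$ has no Right options). If that sub-argument fails, I would fall back on Siegel's characterization of the fuzzy case directly.
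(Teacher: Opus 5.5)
Your plan follows the paper's proof: $\tilde v(G)$ must be an integer; in the cases $G<\cgfarstar$ and $G>\cgfarstar$ you get $\aw(G)\in\mathcal{I}$; in the fuzzy case you argue by contradiction from some $\aw(G^{L_0})\ge 2$ that $G>\cgfarstar$. The Right moves to $G^R+\cgfarstar$ are handled via $\aw(G^R)\ge\aw(G^{L_0})-2\ge 0$, exactly as in the paper.

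There is one gap: Left's reply when Right moves the far star, $G+\cgfarstar\to G+\st j$. Here $j$ ranges over \emph{all} indices below that of $\cgfarstar$, so $\st j$ can be $0$, $\st$, $\st 2$, and so on. You need $G^{L_0}+\st j\ge 0$ for each such $j$. Proposition~\ref{prop:aw-prop}~(ii) cannot give this, because it only compares outcomes of sums \emph{with a far star attached}. ``Replacing $\cgfarstar$ by $\cgfarstar+\st j$'' therefore yields only $G^{L_0}+\st j+\cgfarstar\in\LL$, which concerns $G^{L_0}$ plus a large nimber, not $G^{L_0}+\st j$ with $\st j$ small. Such far-star information does not transfer to small nimbers in general. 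For example, $\aw(\up)=1$ and $\up+\cgfarstar>0$, yet Right moving first wins $\up+\st$ by moving to $\st+\st=0$.

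The missing ingredient is the two-ahead rule \cite[Theorem II.7.13]{Sie13}: an atomic game with atomic weight at least $2$ is positive. By Propositions~\ref{prop:aw-prop}~(i) and \ref{prop:aw-value}~(ii), $\aw(G^{L_0}+\st j)=\aw(G^{L_0})+\aw(\st j)\ge 2$, so $G^{L_0}+\st j>0$ for every $j$. This is exactly how the paper closes this case, and with that substitution your argument is complete.
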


\begin{proof}[Proof of Lemma \ref{lem:aw-option}]
We have
\begin{align}
\mathcal{I}
&= \{n \in \mathbb{Z} : \aw(G^L) - 2 \cglfuz n \cglfuz \aw(G^R) + 2 \,\,\text{for all}\,\, G^L \,\,\text{and}\,\, G^R\}\\
&\eqlab{A}= \{n \in \mathbb{Z} : \aw(G^L) - 1 \leq n \leq \aw(G^R) + 1 \,\,\text{for all}\,\, G^L \,\,\text{and}\,\, G^R\}, \label{eq:jmvly1i3wrzr}
\end{align}
where
(A) follows from the integrality of $\aw(G^L)$ and $\aw(G^R)$.
Since $\aw(G)$ is an integer, the game $\tilde{v}(G)$ in Proposition \ref{prop:aw-calc} must be an integer.
Hence, the above set $\mathcal{I}$ is not empty, so that
there exists $n \in \mathbb{Z}$
such that for any Left option $G^L$ and Right option $G^R$, it holds that $\aw(G^L) - 1 \leq n \leq \aw(G^R) + 1$.
In particular, for any Left option $G^L$ and Right option $G^R$, we have
\begin{align}
\aw(G^L) - 1 \leq \aw(G^R) + 1. \label{eq:ayxtbjubp0i5}
\end{align}

We claim $\aw(G) \in \mathcal{I}$, so that $\aw(G^L) - 1 \leq \aw(G) \leq \aw(G^R) + 1$ as desired.
By \eqref{eq:jo89m0gtjc7i}, if $G < \cgfarstar$ or $G > \cgfarstar$, then $\aw(G) \in \mathcal{I}$ clearly holds.
Thus, we need to show that $G \cgfuzzy \cgfarstar$ implies $\mathcal{I} \owns 0$.

To prove the contraposition, we assume $\mathcal{I} \not\owns 0$.
Without loss of generality, we may assume that $0 < \aw(G^L) - 1$ holds for some Left option $G^L$; that is,
\begin{align}
\aw(G^L) \geq 2. \label{eq:9j2tjbq8x3id}
\end{align}
for some Left option $G^L$.
For all Right options $G^R$, we have
\begin{align}
\aw(G^R)
\eqlab{A}\geq \aw(G^L)-2
\eqlab{B}\geq 0, \label{eq:v1tizhnkis5r}
\end{align}
where 
(A) follows from \eqref{eq:ayxtbjubp0i5},
and (B) follows from \eqref{eq:9j2tjbq8x3id}.

To complete the proof, it suffices to show $G > \cgfarstar$ (i.e., $G + \cgfarstar > 0$).
Left can win $G + \cgfarstar$ playing first by moving to $G^L + \cgfarstar$, which satisfies
\begin{align}
o(G^L + \cgfarstar) \eqlab{A}\geq o(\doubleup + \cgfarstar) = \LL,
\end{align}
where
(A) follows from Proposition \ref{prop:aw-prop} (ii) since $\aw(G^L) \geq 2 = \aw(\doubleup)$ by \eqref{eq:9j2tjbq8x3id} and Proposition \ref{prop:aw-value} (i).
It is confirmed that Left can win $G + \cgfarstar$ playing second as follows.
\begin{itemize}
\item Any Right option $G^R + \cgfarstar$ is in $\LL \cup \NN$ because
\begin{align}
o(G^R + \cgfarstar) \eqlab{A}\geq o(\cgfarstar) = \NN,
\end{align}
where
(A) follows from Proposition \ref{prop:aw-prop} (ii) since $\aw(G^R) \geq 0 = \aw(0)$ by \eqref{eq:v1tizhnkis5r}.
\item Any Right option $G + \st a$ with some $a \in \uint$ is reverted to $G^L + \st a$, which is in $\LL$ by the two-ahead rule \cite[Theorem II.7.13]{Sie13} because
\begin{align}
\aw(G^L + \st a)
\eqlab{A}= \aw(G^L) + \aw(\st a)
\eqlab{B}\geq 2 + 0 = 2,
\end{align}
where
(A) follows from Proposition \ref{prop:aw-prop} (i),
and (B) follows from \eqref{eq:9j2tjbq8x3id} and Proposition \ref{prop:aw-value} (ii).
\end{itemize}
Hence, we conclude that $G > \cgfarstar$.
\end{proof}

Using the above lemma, we prove the integrality of atomic weights as follows.

\begin{proof}[Proof of Theorem \ref{thm:aw-integer}]
We prove this by induction on $\nu(\bm{w})$.
For the base case $\nu(\bm{w}) = 0$, clearly we have $\aw(\bm{0}) = 0$.
We consider the induction step for $\bm{w} \in \wset_+$ below.

We first consider the case $\bm{w} \in \wset_=$.
We show that for any $\gamma \in \uint$, it holds that $\aw(\pos{\bm{w}}{\gamma}) \in \mathbb{Z}$ by induction on $\gamma$.
For the base case $\gamma = 0$, we have $\aw(\pos{\bm{w}}{0}) = \aw(\bm{w}_{\ast}) \in \mathbb{Z}$ directly from the induction hypothesis on $\nu(\bm{w})$.
We consider the induction step $\gamma \geq 1$.
By the induction hypothesis on $\gamma$, the atomic weights of all options of $\pos{\bm{w}}{\gamma}$ are integers,
so that the following maximum and minimum are defined as integers:
\begin{align}
M &\coloneqq \max_{0 \leq \gamma' < \gamma} \aw(\pos{\bm{w}}{\gamma'})
\eqqcolon \aw(\pos{\bm{w}}{\hat{\gamma}}),\\
m &\coloneqq \min_{0 \leq \gamma' < \gamma} \aw(\pos{\bm{w}}{\gamma'})
\eqqcolon \aw(\pos{\bm{w}}{\check{\gamma}}).
\end{align}
Then
\begin{align}
M = \aw(\pos{\bm{w}}{\hat{\gamma}})
\eqlab{A}\leq \aw(\pos{\bm{w}}{\check{\gamma}})+1
= m + 1,
\end{align}
where (A) follows from Lemma \ref{lem:aw-option} (i) and (ii) and the induction hypothesis on $\gamma$; for example, if $\hat{\gamma} < \check{\gamma}$, then 
$\pos{\bm{w}}{\hat{\gamma}}$ is a Left option of $\pos{\bm{w}}{\check{\gamma}}$, and thus
(A) is seen by Lemma \ref{lem:aw-option} (i).
This yields $M-2 \cglfuz m \cglfuz m+2$, so that $\tilde{v}(\pos{\bm{w}}{\gamma})$ in Proposition \ref{prop:aw-calc} is an integer.
Therefore, $\aw(\pos{\bm{w}}{\gamma}) \in \mathbb{Z}$.

We next consider the case $\bm{w} \in \wset_{\neq}$.
We show that $\aw(\ppos{\bm{w}}{\alpha}{\beta}) \in \mathbb{Z}$ by induction on $\alpha + \beta$.
If $\alpha = 0$ or $\beta = 0$, then $\aw(\ppos{\bm{w}}{\alpha}{\beta}) \in \mathbb{Z}$ directly from the induction hypothesis on $\nu(\bm{w})$.
Suppose $\alpha \geq 1$ and $\beta \geq 1$.
By the induction hypothesis on $\alpha + \beta$, all options of $\ppos{\bm{w}}{\alpha}{\beta}$ are integers.
Let
\begin{align}
M &\coloneqq \max_{0 \leq \alpha' < \alpha} \aw(\ppos{\bm{w}}{\alpha'}{\beta})
\eqqcolon \aw(\ppos{\bm{w}}{\hat{\alpha}}{\beta}),\\
m &\coloneqq \min_{0 \leq \beta' < \beta} \aw(\ppos{\bm{w}}{\alpha}{\beta'})
\eqqcolon \aw(\ppos{\bm{w}}{\alpha}{\check{\beta}}).
\end{align}
Then
\begin{align}
M = \aw(\ppos{\bm{w}}{\hat{\alpha}}{\beta})
\eqlab{A}\leq \aw(\ppos{\bm{w}}{\hat{\alpha}}{\check{\beta}})+1
\eqlab{B}\leq \aw(\ppos{\bm{w}}{\alpha}{\check{\beta}})+2
= m + 2,
\end{align}
where
(A) follows from Lemma \ref{lem:aw-option} (ii) and $\check{\beta} < \beta$,
(B) follows from Lemma \ref{lem:aw-option} (i) and $\hat{\alpha} < \alpha$.
This yields $M-2 \cglfuz m+1 \cglfuz m+2$, so that $\tilde{v}(\ppos{\bm{w}}{\alpha}{\beta})$ in Proposition \ref{prop:aw-calc} is an integer.
Therefore, $\aw(\ppos{\bm{w}}{\alpha}{\beta}) \in \mathbb{Z}$.
\end{proof}

\bibliographystyle{plain}
\bibliography{draft}

@article{Bou1901,
	ISSN = {0003486X, 19398980},
	author = {C. L. Bouton},
	journal = {Annals of Mathematics},
	number = {1/4},
	pages = {35--39},
	publisher = {[Annals of Mathematics, Trustees of Princeton University on Behalf of the Annals of Mathematics, Mathematics Department, Princeton University]},
	title = {Nim, A Game with a Complete Mathematical Theory},
	urldate = {2025-11-14},
	volume = {3},
	year = {1901}
}

@ARTICLE{SU93,
	author  = {W. Stromquist and D. Ullman},
	title   = {Sequential compounds of combinatorial games},
	journal = {Theoretical Computer Science}, 
	volume  = {119},
	year    = {1993},
	pages   = {311--321}
}

@ARTICLE{Ste07,
	author  = {F. Stewart},
	title   = {The sequential join of combinatorial games},
	journal = {Integers}, 
	volume  = {7},
	year    = {2007},
	pages	= {G03}
}

@article{Has26,
	title = {The game value of sequential compounds of integers and stars},
	journal = {Theoretical Computer Science},
	volume = {1080},
	pages = {116043},
	year = {2026},
	issn = {0304-3975},
	author = {K. Hashimoto},
}

@article{AN01,
	author = {Albert, M. and Nowakowski, R.},
	year = {2001},
	month = {02},
	title = {The Game of End-Nim},
	volume = {8},
	number = {2},
	pages = {R1},
	journal = {The Electronic Journal of Combinatorics},
}

@article{Lev06,
	author       = {L. Levine},
	title        = {Fractal Sequences and Restricted Nim},
	journal      = {Ars Combinatoria},
	volume       = {80},
	year         = {2006},
	pages        = {113--127},
	bibsource    = {dblp computer science bibliography, https://dblp.org}
}

@article{CN11,
	author = {Cairns, G. and Ho, N. B.},
	title = {Some Remarks on End-Nim},
	journal = {International Journal of Combinatorics},
	volume = {2011},
	number = {1},
	pages = {824742},
	year = {2011}
}

@article{LW19,
	title = {Multi-player End-Nim games},
	journal = {Theoretical Computer Science},
	volume = {761},
	pages = {7--22},
	year = {2019},
	issn = {0304-3975},
	author = {W. A. Liu and T. Wu},
}

@Incollection{Guy95,
	author = {Guy, R. K.},
	title = {Unsolved Problems in Combinatorial Games},
	booktitle = {Combinatorics Advances},
	year = {1995},
	publisher = {Springer US},
	address = {Boston, MA},
	pages = {161--179},
	isbn = {978-1-4613-3554-2}
}

@Incollection{DKW09,
	place={Cambridge},
	title={Ordinal partizan End Nim}, 
	booktitle={Games of No Chance 3},
	publisher={Cambridge University Press},
	author={Duffy, A. and Kolpin, G. and Wolfe, D.},
	year={2009},
	pages={419--426}
}

@Incollection{FR09, 
	place={Cambridge},
	title={The game of End-Wythoff},
	booktitle={Games of No Chance 3},
	publisher={Cambridge University Press},
	author={Fraenkel, A. S. and Reisner, E.},
	year={2009},
	pages={329--348}
}

@book{Sie13,
	title={Combinatorial Game Theory},
	author={Siegel, A. N.},
	isbn={9780821851906},
	lccn={2012043675},
	series={Graduate studies in mathematics},
	year={2013},
	publisher={American Mathematical Society}
}

@book{BCG18,
	title={Winning Ways for Your Mathematical Plays: Volume 1},
	author={Berlekamp, E. R. and Conway, J. H. and Guy, R. K.},
	isbn={9780429945595},
	year={2018},
	publisher={CRC Press}
}

@book{ANW19,
	title={Lessons in Play: An Introduction to Combinatorial Game Theory, Second Edition},
	author={Albert, M. and Nowakowski, R. and Wolfe, D.},
	isbn={9780429524097},
	lccn={2020693737},
	year={2019},
	publisher={CRC Press}
}

@book{Con00,
	title={On Numbers and Games},
	author={Conway, J. H.},
	isbn={9781568811277},
	lccn={00046927},
	series={Ak Peters Series},
	year={2000},
	publisher={Taylor \& Francis}
}

@book{HG16,
	title={An Introduction to Combinatorial Game Theory},
	author={Haff, L. R. and Garner, W. J.},
	isbn={9781365973826},
	year={2016},
	publisher={Lulu.com}
}

\end{document}